\newtheorem{theorem}{Theorem}[section]
\newtheorem{proposition}[theorem]{Proposition}
\newtheorem{lemma}[theorem]{Lemma}
\newtheorem{corollary}[theorem]{Corollary}
\newtheorem{remark}[theorem]{Remark}
\newtheorem{example}[theorem]{Example}
\newtheorem{definition}[theorem]{Definition}
\renewcommand{\theequation}{\thesection.\arabic{equation}}
\newenvironment{notation}{\smallskip{\sc Notation.}\rm}{\smallskip}
\let\pdfoutput=\undefined\fi
\chardef\@x10\chardef\@xv60
\def\tcitime{
\def\@time{%
  \@minute\time\@hour\@minute\divide\@hour\@xv
  \ifnum\@hour<\@x 0\fi\the\@hour:%
  \multiply\@hour\@xv\advance\@minute-\@hour
  \ifnum\@minute<\@x 0\fi\the\@minute
  }}%
\def\x@hyperref#1#2#3{%
   \catcode`\~ = 12
   \catcode`\$ = 12
   \catcode`\_ = 12
   \catcode`\# = 12
   \catcode`\& = 12
   \y@hyperref{#1}{#2}{#3}%
}
\def\y@hyperref#1#2#3#4{%
   #2\ref{#4}#3
   \catcode`\~ = 13
   \catcode`\$ = 3
   \catcode`\_ = 8
   \catcode`\# = 6
   \catcode`\& = 4
}
\def\QCTOpt[#1]#2{%
  \def\QCTOptB{#1}
  \def\QCTOptA{#2}
}
\def\QCTNOpt#1{%
  \def\QCTOptA{#1}
  \let\QCTOptB\empty
}
\def\Qct{%
  \@ifnextchar[{%
    \QCTOpt}{\QCTNOpt}
}
\def\QCBOpt[#1]#2{%
  \def\QCBOptB{#1}%
  \def\QCBOptA{#2}%
}
\def\QCBNOpt#1{%
  \def\QCBOptA{#1}%
  \let\QCBOptB\empty
}
\def\Qcb{%
  \@ifnextchar[{%
    \QCBOpt}{\QCBNOpt}%
}
\def\PrepCapArgs{%
  \ifx\QCBOptA\empty
    \ifx\QCTOptA\empty
      {}%
    \else
      \ifx\QCTOptB\empty
        {\QCTOptA}%
      \else
        [\QCTOptB]{\QCTOptA}%
      \fi
    \fi
  \else
    \ifx\QCBOptA\empty
      {}%
    \else
      \ifx\QCBOptB\empty
        {\QCBOptA}%
      \else
        [\QCBOptB]{\QCBOptA}%
      \fi
    \fi
  \fi
}
\def\GRAPHICSPS#1{%
 \ifcase\GRAPHICSTYPE
   \special{ps: #1}%
 \or
   \special{language "PS", include "#1"}%
 \fi
}%
\def\graffile#1#2#3#4{%
    \bgroup
	   \@inlabelfalse
       \leavevmode
       \@ifundefined{bbl@deactivate}{\def~{\string~}}{\activesoff}%
        \raise -#4 \BOXTHEFRAME{%
           \hbox to #2{\raise #3\hbox to #2{\null #1\hfil}}}%
    \egroup
}%
\def\draftbox#1#2#3#4{%
 \leavevmode\raise -#4 \hbox{%
  \frame{\rlap{\protect\tiny #1}\hbox to #2%
   {\vrule height#3 width\z@ depth\z@\hfil}%
  }%
 }%
}%
\let\nographics=\@msidraft
\newif\ifwasdraft
\def\GRAPHIC#1#2#3#4#5{%
   \ifnum\@msidraft=\@ne\draftbox{#2}{#3}{#4}{#5}%
   \else\graffile{#1}{#3}{#4}{#5}%
   \fi
}
\def\addtoLaTeXparams#1{%
    \edef\LaTeXparams{\LaTeXparams #1}}%
\newif\ifBoxFrame \BoxFramefalse
\newif\ifOverFrame \OverFramefalse
\newif\ifUnderFrame \UnderFramefalse
\def\BOXTHEFRAME#1{%
   \hbox{%
      \ifBoxFrame
         \frame{#1}%
      \else
         {#1}%
      \fi
   }%
}
\def\doFRAMEparams#1{\BoxFramefalse\OverFramefalse\UnderFramefalse\readFRAMEparams#1\end}%
\def\readFRAMEparams#1{%
 \ifx#1\end%
  \let\next=\relax
  \else
  \ifx#1i\dispkind=\z@\fi
  \ifx#1d\dispkind=\@ne\fi
  \ifx#1f\dispkind=\tw@\fi
  \ifx#1t\addtoLaTeXparams{t}\fi
  \ifx#1b\addtoLaTeXparams{b}\fi
  \ifx#1p\addtoLaTeXparams{p}\fi
  \ifx#1h\addtoLaTeXparams{h}\fi
  \ifx#1X\BoxFrametrue\fi
  \ifx#1O\OverFrametrue\fi
  \ifx#1U\UnderFrametrue\fi
  \ifx#1w
    \ifnum\@msidraft=1\wasdrafttrue\else\wasdraftfalse\fi
    \@msidraft=\@ne
  \fi
  \let\next=\readFRAMEparams
  \fi
 \next
 }%
\def\IFRAME#1#2#3#4#5#6{%
      \bgroup
      \let\QCTOptA\empty
      \let\QCTOptB\empty
      \let\QCBOptA\empty
      \let\QCBOptB\empty
      #6%
      \parindent=0pt
      \leftskip=0pt
      \rightskip=0pt
      \setbox0=\hbox{\QCBOptA}%
      \@tempdima=#1\relax
      \ifOverFrame
          \typeout{This is not implemented yet}%
          \show\HELP
      \else
         \ifdim\wd0>\@tempdima
            \advance\@tempdima by \@tempdima
            \ifdim\wd0 >\@tempdima
               \setbox1 =\vbox{%
                  \unskip\hbox to \@tempdima{\hfill\GRAPHIC{#5}{#4}{#1}{#2}{#3}\hfill}%
                  \unskip\hbox to \@tempdima{\parbox[b]{\@tempdima}{\QCBOptA}}%
               }%
               \wd1=\@tempdima
            \else
               \textwidth=\wd0
               \setbox1 =\vbox{%
                 \noindent\hbox to \wd0{\hfill\GRAPHIC{#5}{#4}{#1}{#2}{#3}\hfill}\\%
                 \noindent\hbox{\QCBOptA}%
               }%
               \wd1=\wd0
            \fi
         \else
            \ifdim\wd0>0pt
              \hsize=\@tempdima
              \setbox1=\vbox{%
                \unskip\GRAPHIC{#5}{#4}{#1}{#2}{0pt}%
                \break
                \unskip\hbox to \@tempdima{\hfill \QCBOptA\hfill}%
              }%
              \wd1=\@tempdima
           \else
              \hsize=\@tempdima
              \setbox1=\vbox{%
                \unskip\GRAPHIC{#5}{#4}{#1}{#2}{0pt}%
              }%
              \wd1=\@tempdima
           \fi
         \fi
         \@tempdimb=\ht1
         \advance\@tempdimb by -#2
         \advance\@tempdimb by #3
         \leavevmode
         \raise -\@tempdimb \hbox{\box1}%
      \fi
      \egroup%
}%
\def\DFRAME#1#2#3#4#5{%
  \vspace\topsep
  \hfil\break
  \bgroup
     \leftskip\@flushglue
	 \rightskip\@flushglue
	 \parindent\z@
	 \parfillskip\z@skip
     \let\QCTOptA\empty
     \let\QCTOptB\empty
     \let\QCBOptA\empty
     \let\QCBOptB\empty
	 \vbox\bgroup
        \ifOverFrame 
           #5\QCTOptA\par
        \fi
        \GRAPHIC{#4}{#3}{#1}{#2}{\z@}%
        \ifUnderFrame 
           \break#5\QCBOptA
        \fi
	 \egroup
  \egroup
  \vspace\topsep
  \break
}%
\def\FFRAME#1#2#3#4#5#6#7{%
  \@ifundefined{floatstyle}
    {
     \begin{figure}[#1]%
    }
    {
	 \ifx#1h
      \begin{figure}[H]%
	 \else
      \begin{figure}[#1]%
	 \fi
	}
  \let\QCTOptA\empty
  \let\QCTOptB\empty
  \let\QCBOptA\empty
  \let\QCBOptB\empty
  \ifOverFrame
    #4
    \ifx\QCTOptA\empty
    \else
      \ifx\QCTOptB\empty
        \caption{\QCTOptA}%
      \else
        \caption[\QCTOptB]{\QCTOptA}%
      \fi
    \fi
    \ifUnderFrame\else
      \label{#5}%
    \fi
  \else
    \UnderFrametrue%
  \fi
  \begin{center}\GRAPHIC{#7}{#6}{#2}{#3}{\z@}\end{center}%
  \ifUnderFrame
    #4
    \ifx\QCBOptA\empty
      \caption{}%
    \else
      \ifx\QCBOptB\empty
        \caption{\QCBOptA}%
      \else
        \caption[\QCBOptB]{\QCBOptA}%
      \fi
    \fi
    \label{#5}%
  \fi
  \end{figure}%
 }%
\def\makeactives{
  \catcode`\"=\active
  \catcode`\;=\active
  \catcode`\:=\active
  \catcode`\'=\active
  \catcode`\~=\active
}
   \gdef\activesoff{%
      \def"{\string"}%
      \def;{\string;}%
      \def:{\string:}%
      \def'{\string'}%
      \def~{\string~}%
    }
\def\FRAME#1#2#3#4#5#6#7#8{%
 \bgroup
 \ifnum\@msidraft=\@ne
   \wasdrafttrue
 \else
   \wasdraftfalse%
 \fi
 \def\LaTeXparams{}%
 \dispkind=\z@
 \def\LaTeXparams{}%
 \doFRAMEparams{#1}%
 \ifnum\dispkind=\z@\IFRAME{#2}{#3}{#4}{#7}{#8}{#5}\else
  \ifnum\dispkind=\@ne\DFRAME{#2}{#3}{#7}{#8}{#5}\else
   \ifnum\dispkind=\tw@
    \edef\@tempa{\noexpand\FFRAME{\LaTeXparams}}%
    \@tempa{#2}{#3}{#5}{#6}{#7}{#8}%
    \fi
   \fi
  \fi
  \ifwasdraft\@msidraft=1\else\@msidraft=0\fi{}%
  \egroup
 }%
\def\TEXUX#1{"texux"}
\long\def\QQQ#1#2{%
     \long\expandafter\def\csname#1\endcsname{#2}}%
\long\def\QQA#1#2{}%
\def\QTR#1#2{{\csname#1\endcsname {#2}}}%
\def\EXPAND#1[#2]#3{}%
\def\NOEXPAND#1[#2]#3{}%
\def\LaTeXparent#1{}%
\def\ChildStyles#1{}%
\def\ChildDefaults#1{}%
\def\QTagDef#1#2#3{}%
  \providecommand{\UNICODE}[2][]{\protect\rule{.1in}{.1in}}
  \providecommand{\U}[1]{\protect\rule{.1in}{.1in}}
\def\QQfnmark#1{\footnotemark}
 \def\abstract{%
  \if@twocolumn
   \section*{Abstract (Not appropriate in this style!)}%
   \else \small 
   \begin{center}{\bf Abstract\vspace{-.5em}\vspace{\z@}}\end{center}%
   \quotation 
   \fi
  }%
   \def\registered{\relax\ifmmode{}\r@gistered
                    \else$\m@th\r@gistered$\fi}%
 \def\r@gistered{^{\ooalign
  {\hfil\raise.07ex\hbox{$\scriptstyle\rm\text{R}$}\hfil\crcr
  \mathhexbox20D}}}}{}%
\newdimen\theight
\def\newfmtname{LaTeX2e}
  \DeclareOldFontCommand{\rm}{\normalfont\rmfamily}{\mathrm}
  \DeclareOldFontCommand{\sf}{\normalfont\sffamily}{\mathsf}
  \DeclareOldFontCommand{\tt}{\normalfont\ttfamily}{\mathtt}
  \DeclareOldFontCommand{\bf}{\normalfont\bfseries}{\mathbf}
  \DeclareOldFontCommand{\it}{\normalfont\itshape}{\mathit}
  \DeclareOldFontCommand{\sl}{\normalfont\slshape}{\@nomath\sl}
  \DeclareOldFontCommand{\sc}{\normalfont\scshape}{\@nomath\sc}
\def\alpha{{\Greekmath 010B}}%
\def\beta{{\Greekmath 010C}}%
\def\gamma{{\Greekmath 010D}}%
\def\delta{{\Greekmath 010E}}%
\def\epsilon{{\Greekmath 010F}}%
\def\zeta{{\Greekmath 0110}}%
\def\eta{{\Greekmath 0111}}%
\def\theta{{\Greekmath 0112}}%
\def\iota{{\Greekmath 0113}}%
\def\kappa{{\Greekmath 0114}}%
\def\lambda{{\Greekmath 0115}}%
\def\mu{{\Greekmath 0116}}%
\def\nu{{\Greekmath 0117}}%
\def\xi{{\Greekmath 0118}}%
\def\pi{{\Greekmath 0119}}%
\def\rho{{\Greekmath 011A}}%
\def\sigma{{\Greekmath 011B}}%
\def\tau{{\Greekmath 011C}}%
\def\upsilon{{\Greekmath 011D}}%
\def\phi{{\Greekmath 011E}}%
\def\chi{{\Greekmath 011F}}%
\def\psi{{\Greekmath 0120}}%
\def\omega{{\Greekmath 0121}}%
\def\varepsilon{{\Greekmath 0122}}%
\def\vartheta{{\Greekmath 0123}}%
\def\varpi{{\Greekmath 0124}}%
\def\varrho{{\Greekmath 0125}}%
\def\varsigma{{\Greekmath 0126}}%
\def\varphi{{\Greekmath 0127}}%
\def\nabla{{\Greekmath 0272}}
\def\FindBoldGroup{%
   {\setbox0=\hbox{$\mathbf{x\global\edef\theboldgroup{\the\mathgroup}}$}}%
}
\def\Greekmath#1#2#3#4{%
    \if@compatibility
        \ifnum\mathgroup=\symbold
           \mathchoice{\mbox{\boldmath$\displaystyle\mathchar"#1#2#3#4$}}%
                      {\mbox{\boldmath$\textstyle\mathchar"#1#2#3#4$}}%
                      {\mbox{\boldmath$\scriptstyle\mathchar"#1#2#3#4$}}%
                      {\mbox{\boldmath$\scriptscriptstyle\mathchar"#1#2#3#4$}}%
        \else
           \mathchar"#1#2#3#4%
        \fi 
    \else 
        \FindBoldGroup
        \ifnum\mathgroup=\theboldgroup 
           \mathchoice{\mbox{\boldmath$\displaystyle\mathchar"#1#2#3#4$}}%
                      {\mbox{\boldmath$\textstyle\mathchar"#1#2#3#4$}}%
                      {\mbox{\boldmath$\scriptstyle\mathchar"#1#2#3#4$}}%
                      {\mbox{\boldmath$\scriptscriptstyle\mathchar"#1#2#3#4$}}%
        \else
           \mathchar"#1#2#3#4%
        \fi     	    
	  \fi}
\newif\ifGreekBold  \GreekBoldfalse
\let\SAVEPBF=\pbf
\def\pbf{\GreekBoldtrue\SAVEPBF}%
  \newcounter{equationnumber}  
  \def\mathletters{%
     \addtocounter{equation}{1}
     \edef\@currentlabel{\theequation}%
     \setcounter{equationnumber}{\c@equation}
     \setcounter{equation}{0}%
     \edef\theequation{\@currentlabel\noexpand\alph{equation}}%
  }
    \def\BibTeX{{\rm B\kern-.05em{\sc i\kern-.025em b}\kern-.08em
                 T\kern-.1667em\lower.7ex\hbox{E}\kern-.125emX}}}{}%
\def\AmS{{\protect\usefont{OMS}{cmsy}{m}{n}%
                A\kern-.1667em\lower.5ex\hbox{M}\kern-.125emS}}}{}%
\def\@@eqncr{\let\@tempa\relax
    \ifcase\@eqcnt \def\@tempa{& & &}\or \def\@tempa{& &}%
      \else \def\@tempa{&}\fi
     \@tempa
     \if@eqnsw
        \iftag@
           \@taggnum
        \else
           \@eqnnum\stepcounter{equation}%
        \fi
     \fi
     \global\tag@false
     \global\@eqnswtrue
     \global\@eqcnt\z@\cr}
\def\TCItag{\@ifnextchar*{\@TCItagstar}{\@TCItag}}
\def\@TCItag#1{%
    \global\tag@true
    \global\def\@taggnum{(#1)}%
    \global\def\@currentlabel{#1}}
\def\@TCItagstar*#1{%
    \global\tag@true
    \global\def\@taggnum{#1}%
    \global\def\@currentlabel{#1}}
\def\ExitTCILatex{\makeatother }
\if@compatibility\message{amsmath already loaded}\fi\aftergroup\ExitTCILatex}
\if@compatibility\message{amstex already loaded}\fi\aftergroup\ExitTCILatex}
\if@compatibility\message{amsgen already loaded}\fi\aftergroup\ExitTCILatex}
\let\DOTSI\relax
\def\RIfM@{\relax\ifmmode}%
\def\FN@{\futurelet\next}%
\def\iint{\DOTSI\intno@\tw@\FN@\ints@}%
\def\iiint{\DOTSI\intno@\thr@@\FN@\ints@}%
\def\iiiint{\DOTSI\intno@4 \FN@\ints@}%
\def\idotsint{\DOTSI\intno@\z@\FN@\ints@}%
\def\ints@{\findlimits@\ints@@}%
\newif\iflimtoken@
\newif\iflimits@
\def\findlimits@{\limtoken@true\ifx\next\limits\limits@true
 \else\ifx\next\nolimits\limits@false\else
 \limtoken@false\ifx\ilimits@\nolimits\limits@false\else
 \ifinner\limits@false\else\limits@true\fi\fi\fi\fi}%
\def\multint@{\int\ifnum\intno@=\z@\intdots@                          
 \else\intkern@\fi                                                    
 \ifnum\intno@>\tw@\int\intkern@\fi                                   
 \ifnum\intno@>\thr@@\int\intkern@\fi                                 
 \int}
\def\multintlimits@{\intop\ifnum\intno@=\z@\intdots@\else\intkern@\fi
 \ifnum\intno@>\tw@\intop\intkern@\fi
 \ifnum\intno@>\thr@@\intop\intkern@\fi\intop}%
\def\intic@{%
    \mathchoice{\hskip.5em}{\hskip.4em}{\hskip.4em}{\hskip.4em}}%
\def\negintic@{\mathchoice
 {\hskip-.5em}{\hskip-.4em}{\hskip-.4em}{\hskip-.4em}}%
\def\ints@@{\iflimtoken@                                              
 \def\ints@@@{\iflimits@\negintic@
   \mathop{\intic@\multintlimits@}\limits                             
  \else\multint@\nolimits\fi                                          
  \eat@}
 \else                                                                
 \def\ints@@@{\iflimits@\negintic@
  \mathop{\intic@\multintlimits@}\limits\else
  \multint@\nolimits\fi}\fi\ints@@@}%
\def\intkern@{\mathchoice{\!\!\!}{\!\!}{\!\!}{\!\!}}%
\def\plaincdots@{\mathinner{\cdotp\cdotp\cdotp}}%
\def\intdots@{\mathchoice{\plaincdots@}%
 {{\cdotp}\mkern1.5mu{\cdotp}\mkern1.5mu{\cdotp}}%
 {{\cdotp}\mkern1mu{\cdotp}\mkern1mu{\cdotp}}%
 {{\cdotp}\mkern1mu{\cdotp}\mkern1mu{\cdotp}}}%
\def\RIfM@{\relax\protect\ifmmode}
\def\text{\RIfM@\expandafter\text@\else\expandafter\mbox\fi}
\let\nfss@text\text
\def\text@#1{\mathchoice
   {\textdef@\displaystyle\f@size{#1}}%
   {\textdef@\textstyle\tf@size{\firstchoice@false #1}}%
   {\textdef@\textstyle\sf@size{\firstchoice@false #1}}%
   {\textdef@\textstyle \ssf@size{\firstchoice@false #1}}%
   \glb@settings}
\def\textdef@#1#2#3{\hbox{{%
                    \everymath{#1}%
                    \let\f@size#2\selectfont
                    #3}}}
\newif\iffirstchoice@
\def\Let@{\relax\iffalse{\fi\let\\=\cr\iffalse}\fi}%
\def\vspace@{\def\vspace##1{\crcr\noalign{\vskip##1\relax}}}%
\def\multilimits@{\bgroup\vspace@\Let@
 \baselineskip\fontdimen10 \scriptfont\tw@
 \advance\baselineskip\fontdimen12 \scriptfont\tw@
 \lineskip\thr@@\fontdimen8 \scriptfont\thr@@
 \lineskiplimit\lineskip
 \vbox\bgroup\ialign\bgroup\hfil$\m@th\scriptstyle{##}$\hfil\crcr}%
\def\Sb{_\multilimits@}%
\def\endSb{\crcr\egroup\egroup\egroup}%
\def\Sp{^\multilimits@}%
\newdimen\ex@
\def\rightarrowfill@#1{$#1\m@th\mathord-\mkern-6mu\cleaders
 \hbox{$#1\mkern-2mu\mathord-\mkern-2mu$}\hfill
 \mkern-6mu\mathord\rightarrow$}%
\def\leftarrowfill@#1{$#1\m@th\mathord\leftarrow\mkern-6mu\cleaders
 \hbox{$#1\mkern-2mu\mathord-\mkern-2mu$}\hfill\mkern-6mu\mathord-$}%
\def\leftrightarrowfill@#1{$#1\m@th\mathord\leftarrow
\mkern-6mu\cleaders
 \hbox{$#1\mkern-2mu\mathord-\mkern-2mu$}\hfill
 \mkern-6mu\mathord\rightarrow$}%
\def\overrightarrow{\mathpalette\overrightarrow@}%
\def\overrightarrow@#1#2{\vbox{\ialign{##\crcr\rightarrowfill@#1\crcr
 \noalign{\kern-\ex@\nointerlineskip}$\m@th\hfil#1#2\hfil$\crcr}}}%
\def\overleftarrow{\mathpalette\overleftarrow@}%
\def\overleftarrow@#1#2{\vbox{\ialign{##\crcr\leftarrowfill@#1\crcr
 \noalign{\kern-\ex@\nointerlineskip}$\m@th\hfil#1#2\hfil$\crcr}}}%
\def\overleftrightarrow{\mathpalette\overleftrightarrow@}%
\def\overleftrightarrow@#1#2{\vbox{\ialign{##\crcr
   \leftrightarrowfill@#1\crcr
 \noalign{\kern-\ex@\nointerlineskip}$\m@th\hfil#1#2\hfil$\crcr}}}%
\def\underrightarrow{\mathpalette\underrightarrow@}%
\def\underrightarrow@#1#2{\vtop{\ialign{##\crcr$\m@th\hfil#1#2\hfil
  $\crcr\noalign{\nointerlineskip}\rightarrowfill@#1\crcr}}}%
\def\underleftarrow{\mathpalette\underleftarrow@}%
\def\underleftarrow@#1#2{\vtop{\ialign{##\crcr$\m@th\hfil#1#2\hfil
  $\crcr\noalign{\nointerlineskip}\leftarrowfill@#1\crcr}}}%
\def\underleftrightarrow{\mathpalette\underleftrightarrow@}%
\def\underleftrightarrow@#1#2{\vtop{\ialign{##\crcr$\m@th
  \hfil#1#2\hfil$\crcr
 \noalign{\nointerlineskip}\leftrightarrowfill@#1\crcr}}}%
\def\qopnamewl@#1{\mathop{\operator@font#1}\nlimits@}
\let\nlimits@\displaylimits
\def\setboxz@h{\setbox\z@\hbox}
\def\varlim@#1#2{\mathop{\vtop{\ialign{##\crcr
 \hfil$#1\m@th\operator@font lim$\hfil\crcr
 \noalign{\nointerlineskip}#2#1\crcr
 \noalign{\nointerlineskip\kern-\ex@}\crcr}}}}
 \def\rightarrowfill@#1{\m@th\setboxz@h{$#1-$}\ht\z@\z@
  $#1\copy\z@\mkern-6mu\cleaders
  \hbox{$#1\mkern-2mu\box\z@\mkern-2mu$}\hfill
  \mkern-6mu\mathord\rightarrow$}
\def\leftarrowfill@#1{\m@th\setboxz@h{$#1-$}\ht\z@\z@
  $#1\mathord\leftarrow\mkern-6mu\cleaders
  \hbox{$#1\mkern-2mu\copy\z@\mkern-2mu$}\hfill
  \mkern-6mu\box\z@$}
\def\projlim{\qopnamewl@{proj\,lim}}
\def\injlim{\qopnamewl@{inj\,lim}}
\def\varinjlim{\mathpalette\varlim@\rightarrowfill@}
\def\varprojlim{\mathpalette\varlim@\leftarrowfill@}
\def\varliminf{\mathpalette\varliminf@{}}
\def\varliminf@#1{\mathop{\underline{\vrule\@depth.2\ex@\@width\z@
   \hbox{$#1\m@th\operator@font lim$}}}}
\def\varlimsup{\mathpalette\varlimsup@{}}
\def\varlimsup@#1{\mathop{\overline
  {\hbox{$#1\m@th\operator@font lim$}}}}
\def\align{\@verbatim \frenchspacing\@vobeyspaces \@alignverbatim
You are using the "align" environment in a style in which it is not defined.}
\let\csname endalign*\endcsname =\endtrivlist
\def\alignat{\@verbatim \frenchspacing\@vobeyspaces \@alignatverbatim
You are using the "alignat" environment in a style in which it is not defined.}
\let\csname endalignat*\endcsname =\endtrivlist
\def\xalignat{\@verbatim \frenchspacing\@vobeyspaces \@xalignatverbatim
You are using the "xalignat" environment in a style in which it is not defined.}
\let\csname endxalignat*\endcsname =\endtrivlist
\def\gather{\@verbatim \frenchspacing\@vobeyspaces \@gatherverbatim
You are using the "gather" environment in a style in which it is not defined.}
\let\csname endgather*\endcsname =\endtrivlist
\def\multiline{\@verbatim \frenchspacing\@vobeyspaces \@multilineverbatim
You are using the "multiline" environment in a style in which it is not defined.}
\let\csname endmultiline*\endcsname =\endtrivlist
\def\arrax{\@verbatim \frenchspacing\@vobeyspaces \@arraxverbatim
You are using a type of "array" construct that is only allowed in AmS-LaTeX.}
\def\tabulax{\@verbatim \frenchspacing\@vobeyspaces \@tabulaxverbatim
You are using a type of "tabular" construct that is only allowed in AmS-LaTeX.}
\let\csname endarrax*\endcsname =\endtrivlist
\let\csname endtabulax*\endcsname =\endtrivlist
 \def\endequation{%
     \ifmmode\ifinner 
      \iftag@
        \addtocounter{equation}{-1} 
        $\hfil
           \displaywidth\linewidth\@taggnum\egroup \endtrivlist
        \global\tag@false
        \global\@ignoretrue   
      \else
        $\hfil
           \displaywidth\linewidth\@eqnnum\egroup \endtrivlist
        \global\tag@false
        \global\@ignoretrue 
      \fi
     \else   
      \iftag@
        \addtocounter{equation}{-1} 
        \eqno \hbox{\@taggnum}
        \global\tag@false%
        $$\global\@ignoretrue
      \else
        \eqno \hbox{\@eqnnum}
        $$\global\@ignoretrue
      \fi
     \fi\fi
 } 
 \newif\iftag@ \tag@false
 \def\TCItag{\@ifnextchar*{\@TCItagstar}{\@TCItag}}
 \def\@TCItag#1{%
     \global\tag@true
     \global\def\@taggnum{(#1)}%
     \global\def\@currentlabel{#1}}
 \def\@TCItagstar*#1{%
     \global\tag@true
     \global\def\@taggnum{#1}%
     \global\def\@currentlabel{#1}}
     \def\tag{\@ifnextchar*{\@tagstar}{\@tag}}
     \def\@tag#1{%
         \global\tag@true
         \global\def\@taggnum{(#1)}}
     \def\@tagstar*#1{%
         \global\tag@true
         \global\def\@taggnum{#1}}
\def\qed{\hfill$\square$\par}
\def\Qcb#1{#1}
\def\FRAME#1#2#3#4#5#6#7#8
\begin{document}
\title[]{Lipschitz invariance of walk dimension on connected self-similar sets}
\author[Gu]{Qingsong Gu}
\address{Department of Mathematics, The Chinese University of Hong Kong, Shatin, N.T., Hong Kong}
\email{001gqs@163.com}
\author[Rao]{Hui Rao}
\address{Department of Mathematics and Statistics, Central China Normal University, Wuhan 430079, China}
\email{hrao@mail.ccnu.edu.cn}
\date{\today}

\begin{abstract}
Walk dimension is an important conception in analysis of fractals.
In this paper we prove that the walk dimension of a connected compact set
 possessing an Alfors regular measure is an invariant under Lipschitz transforms.
 As an application, we show some generalized Sierpi\'nski gaskets are not Lipschitz equivalent.
\end{abstract}

\subjclass[msc2010]{Primary: 35K08 Secondary: {28A80, 35J08, 46E35, 47D07}}
\keywords{walk dimension, Lipschitz invariant, Besov space, heat kernel}
\thanks{The second author is supported by NSFC Nos. 11431007 and 11471075.}
\maketitle
\tableofcontents


\section{\textbf{Introduction}}
Let $(X, d_1)$ and $(Y,d_2)$ be two metric spaces.
We say that $T:(X,d_1)\to (Y,d_2)$ is a bi-Lipschitz transform, if $T$ is a bijection, and there exists a constant $C>0$ such that for any $x,y\in X$,
\begin{equation}
C^{-1}d_1(x,y)\leq d_2(Tx,Ty)\leq Cd_1(x,y).
\end{equation}
The studies of Lipschitz equivalence of self-similar sets are initialled by Falconer and Marsh \cite{FaMa92} and
David and Semmes \cite{DS}.  Rao, Ruan and Xi \cite{RRX06} (2006) answered a question posed by David and Semmes \cite{DS}, by showing that the self-similar sets illustrated in Figure \ref{One-3-5} are Lipschitz equivalence. After that, there are many works devoted to this topic, for example, Xi and Xiong \cite{XiXi10, XiXi12}, Luo and Lau  \cite{LuoLau13},
Ruan, Wang and Xi \cite{RWX12},  and Rao and Zhang \cite{RZ15}.
However, the studies mentioned above are all on self-similar sets which are totally disconnected.
\vskip -0.2cm
\begin{figure}[h]
  \includegraphics[width=0.8\textwidth]{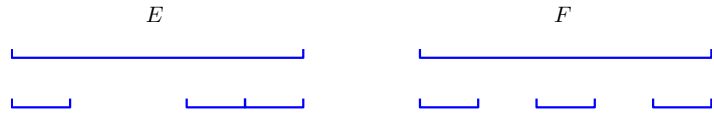}\\
  \vskip -1 cm
  \caption{The Cantor sets $E$ and $F$ are Lipschitz equivalence \cite{RRX06}.}\label{One-3-5}
\end{figure}

 Recently, there are some studies on a class of self-similar sets which are not totally connected.
 A non-empty compact set satisfying the set equation
 $$
 F=\bigcup_{d\in {\mathcal D}} \frac{F+d}{n}
 $$
 is called a \emph{fractal square} if $n\geq 2$ and ${\mathcal D}\subset \{0,1,\dots, n-1\}^2$.
 Rao and Zhu \cite{RaoZhu} studied fractal squares containing line segments.
 Using a certain finite state automaton, they construct a bi-Lipschitz mapping between  the fractals
 illustrated in Figure \ref{fig:rao-zhu}. Ruan and Wang \cite{RuanWang} studied  fractal squares of ratio $1/3$ and with $7$ or $8$ branches,
  which are all connected fractals. They proved that two such sets are Lipschitz equivalent if and only if they are isometric. See Figure \ref{fig:ruan-wang-1}.
  Their method is to show that two such sets are not homeomorphic by various connectivity property, which \textbf{depends} on very careful observations.

\begin{figure}[htbp]
\subfigure[]{
  \includegraphics[width=0.3\textwidth]{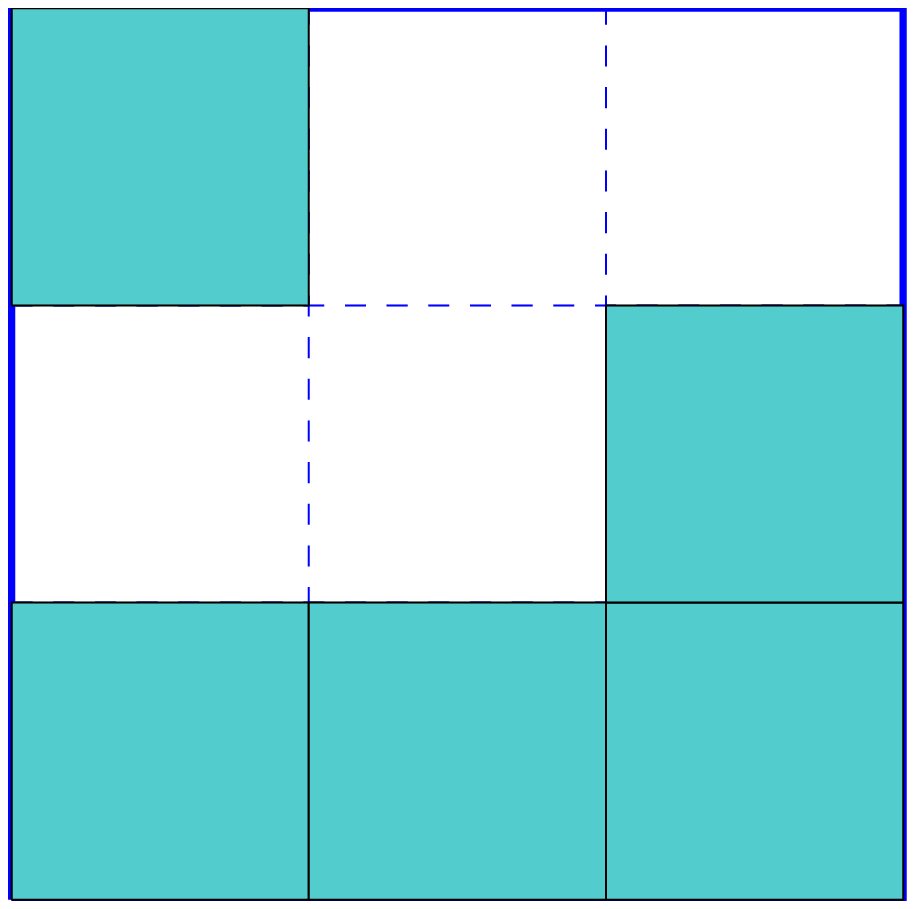}}
  \subfigure[]{
  \includegraphics[width=0.3\textwidth]{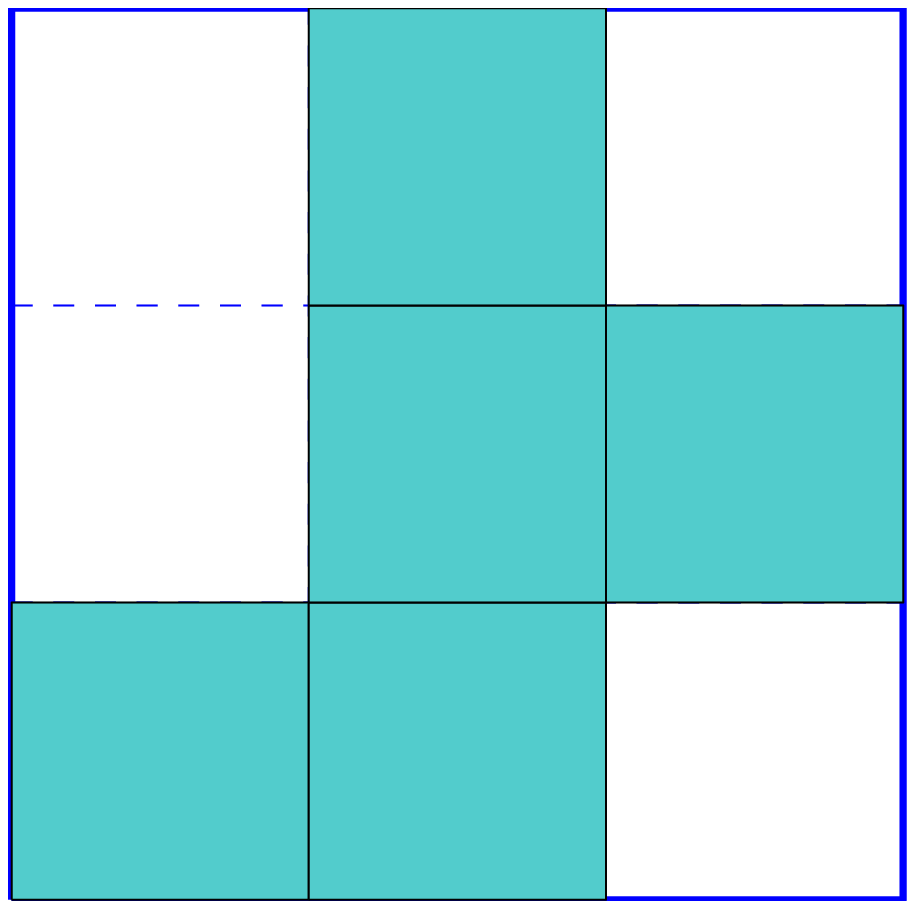}
  }\\
  \subfigure[]{
  \includegraphics[width=0.3\textwidth]{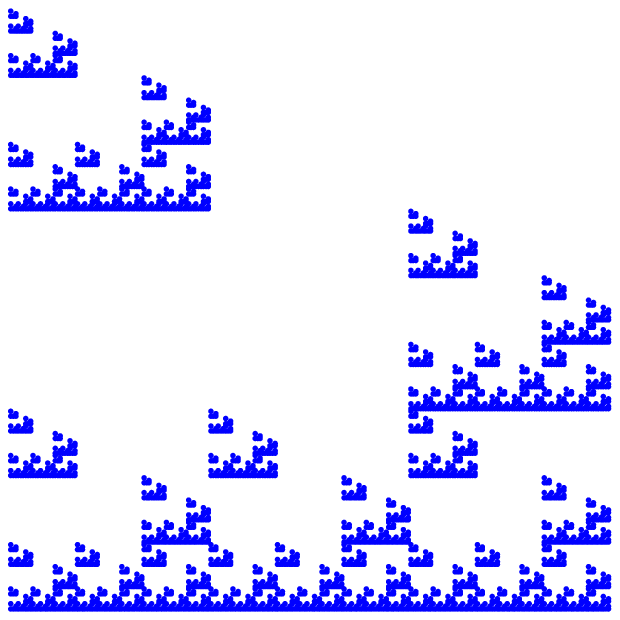}
  }
   \subfigure[]{
  \includegraphics[width=0.3\textwidth]{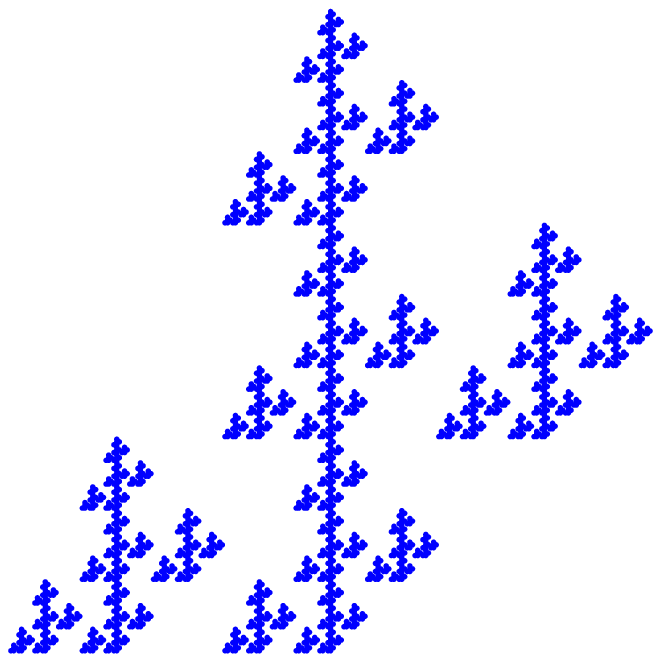}
  }\\
\caption{Two fractal squares containing line segments.}
\label{fig:rao-zhu}
\end{figure}

\begin{figure}[htbp]
\subfigure[]{
  \includegraphics[width=0.3\textwidth]{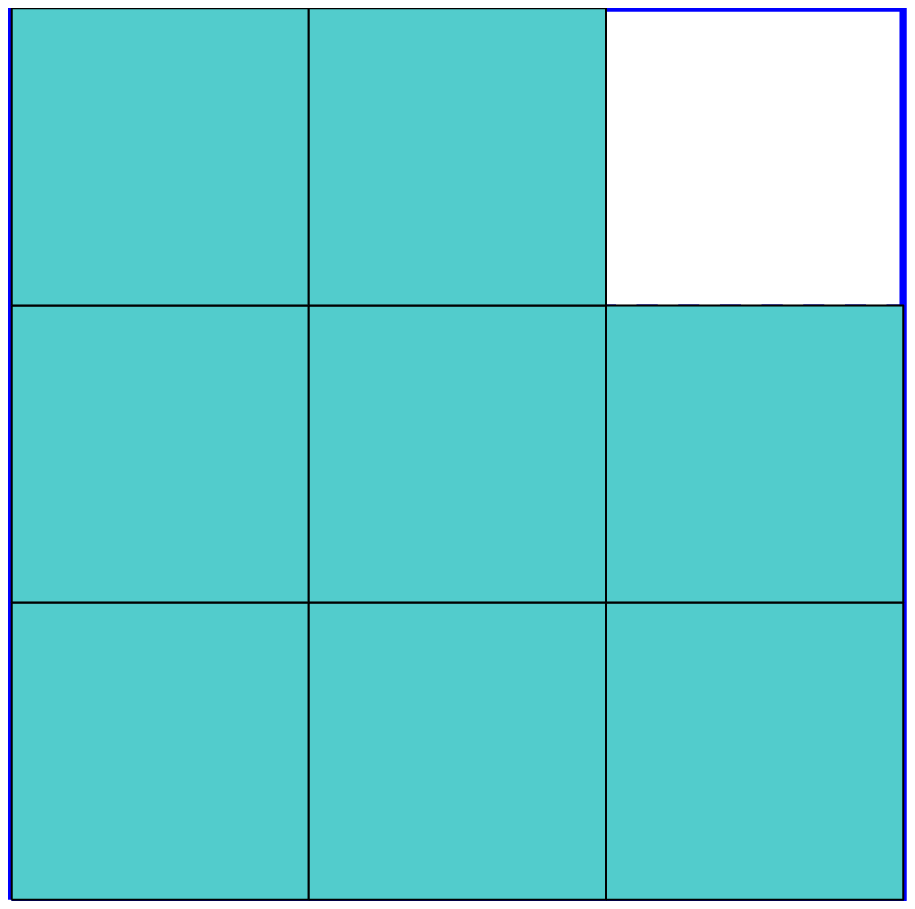}}
  \subfigure[]{
  \includegraphics[width=0.3\textwidth]{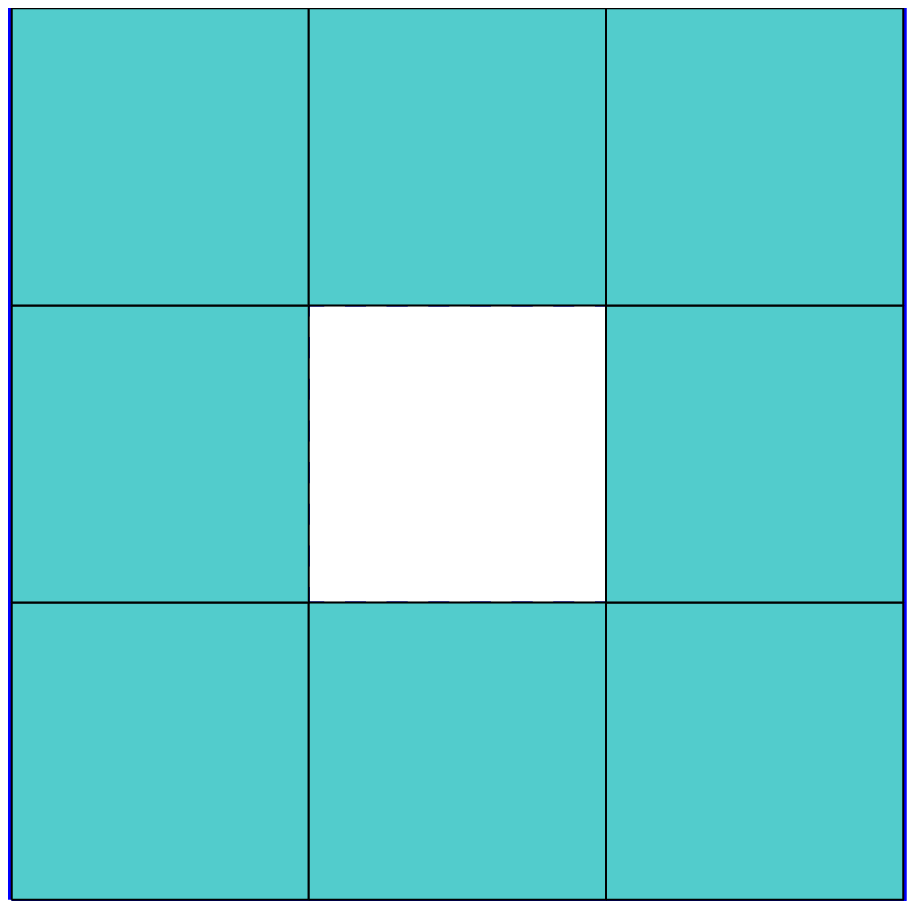}
  }
  \subfigure[]{
  \includegraphics[width=0.3\textwidth]{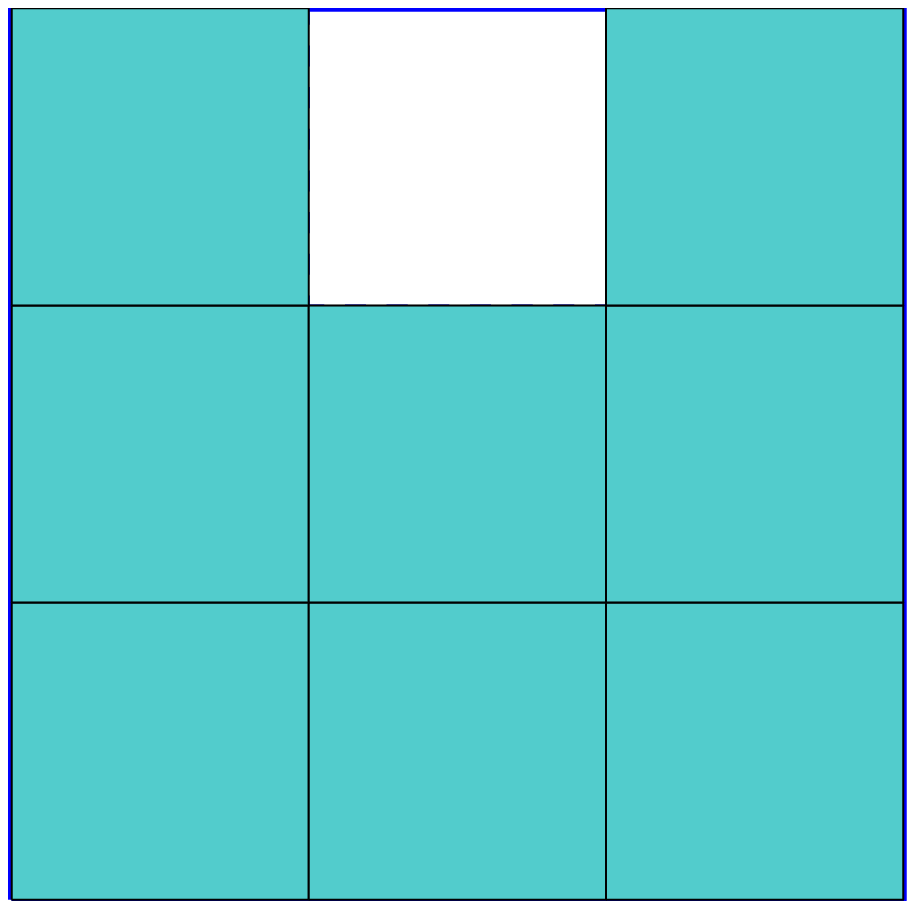}}\\
  \subfigure[]{
  \includegraphics[width=0.3\textwidth]{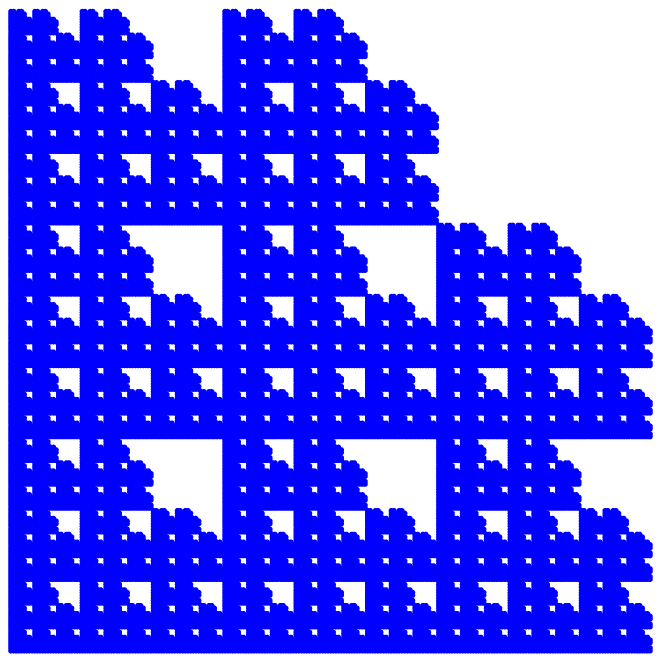}
  }
   \subfigure[]{
  \includegraphics[width=0.3\textwidth]{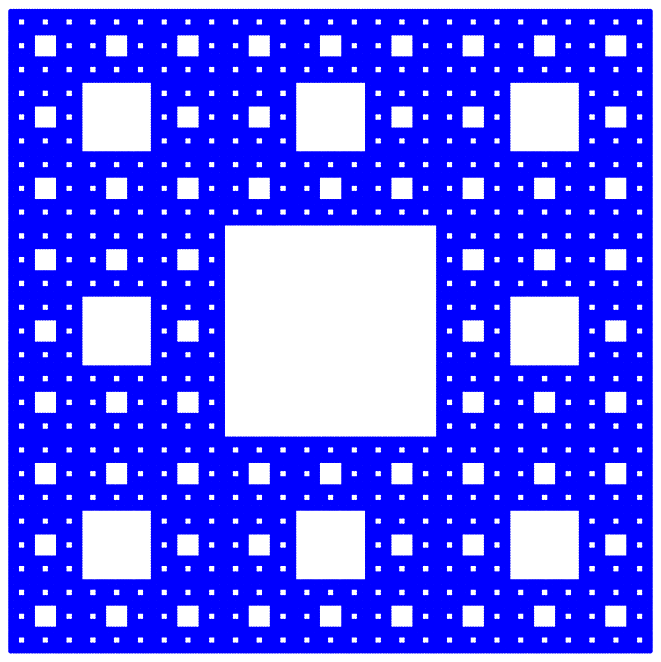}
  }
  \subfigure[]{
  \includegraphics[width=0.3\textwidth]{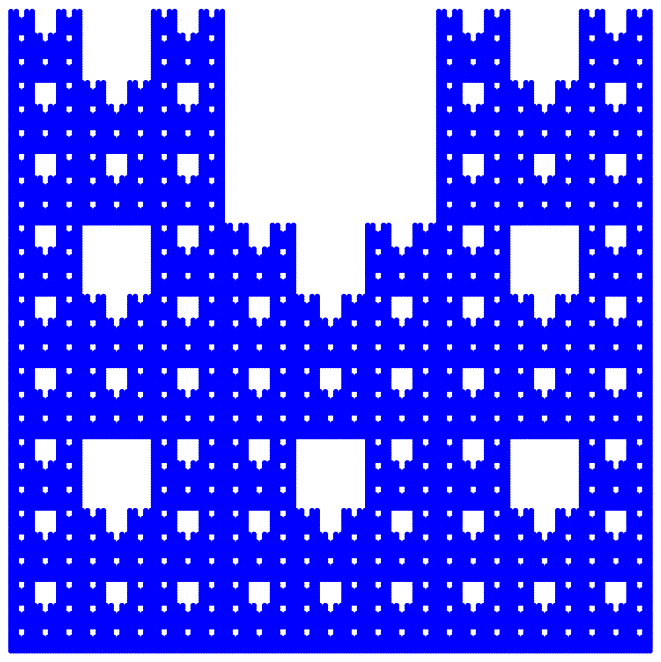}
  }
\caption{Up to isometries, there are three fractal squares with $8$ branches. All of them are not Lipschitz equivalent.}
\label{fig:ruan-wang-1}
\end{figure}

The study of the topological properties of connected self-similar set is a very hard problem.
By Whyburn \cite{Why}, the two fractal squares in Figure \ref{fig:n=5} are homeomorphic.
It is generally believed that these two fractal squares are not Lipschitz equivalent \cite{RRW13}.
To show two sets are not Lipschitz equivalent, the main method is to construct a certain Lipschitz invariant
to distinct them, which is the \textbf{motivation} of the present paper.

In this paper, we use the theory of Laplacian on fractals to construct Lipschitz invariant.
We show that the critical exponent $\beta^\ast$ defined in Grigor'yan, Hu and Lau \cite[Definition 4.4]{GRIHL03} is an Lipschitz invariant.
It is shown that \cite{GRIHL03} this critical value coincides with the \emph{walk dimension} under a mild assumption.
(See also Theorem 3.1.)
 Due to the difficulty of computing,
 the walk dimension can be obtained for few self-similar sets.  Nevertheless, we hope our study may shed some light to the study of Lipschitz equivalence of connected fractals.

\begin{figure}[htbp]
\subfigure[]{
  \includegraphics[width=0.3\textwidth]{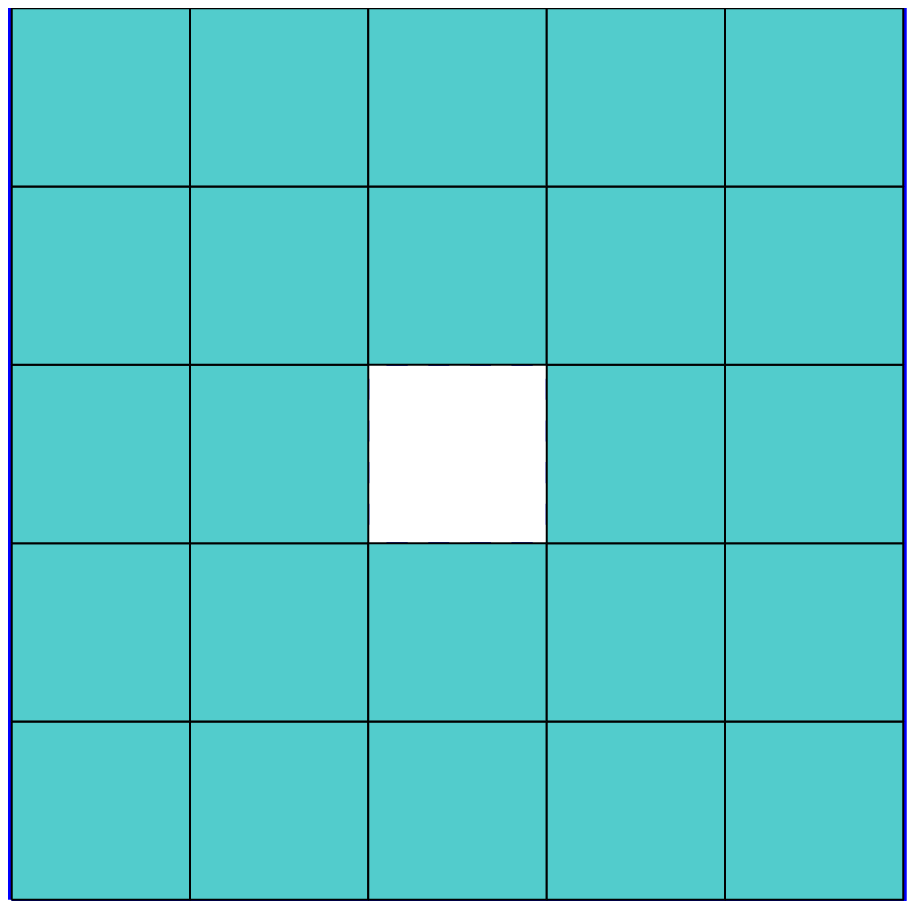}}
  \subfigure[]{
  \includegraphics[width=0.3\textwidth]{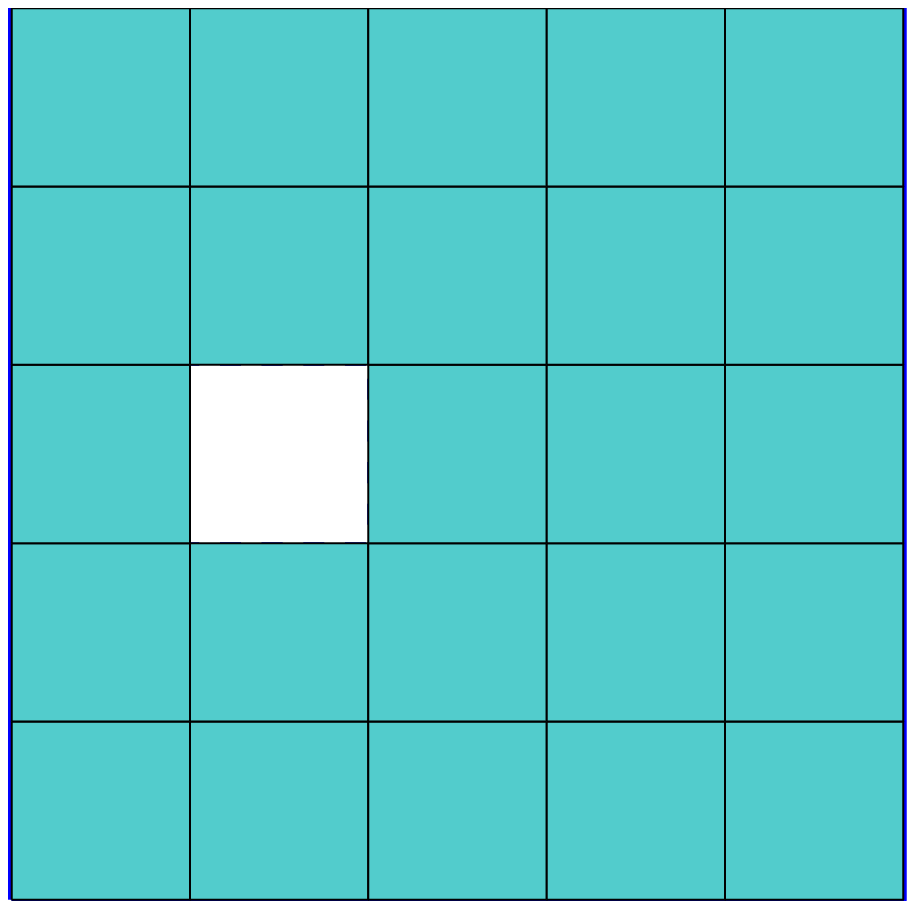}
  }\\
  \subfigure[]{
  \includegraphics[width=0.3\textwidth]{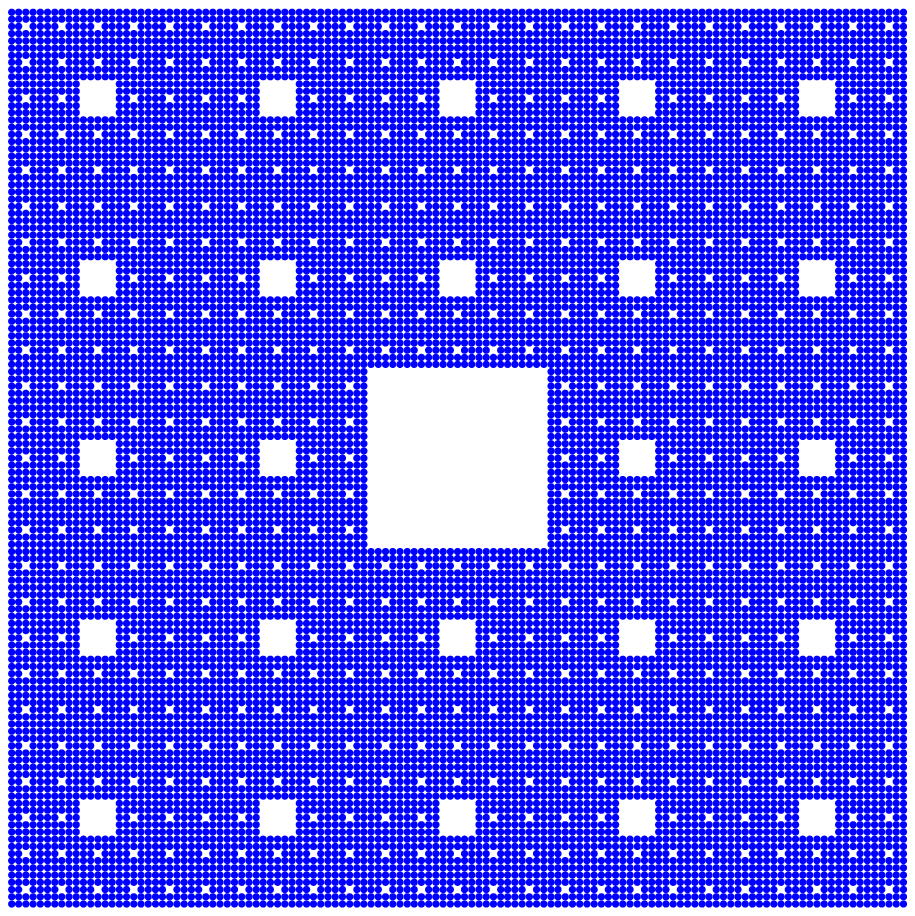}
  }
   \subfigure[]{
  \includegraphics[width=0.3\textwidth]{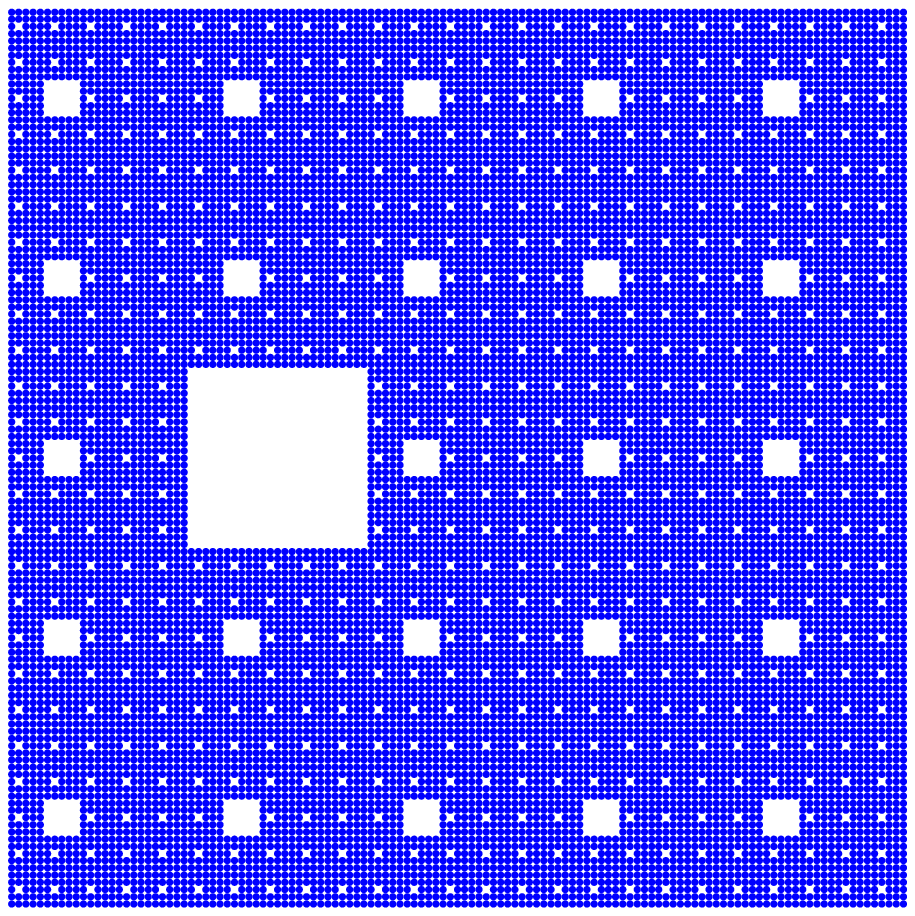}
  }\\
\caption{It is conjectured \cite{RuanWang} that the above two fractal squares are not Lipschitz equivalent.}
\label{fig:n=5}
\end{figure}


In the following, we give a precise description of our result.
Let $(M,d)$ be a locally compact, separable metric space, let $\mu$ be a Radon measure on $M$ with full support. We call $(M,d,\mu)$ a \emph{metric measure space.} Set
\begin{equation*}
V(x,r):=\mu (B(x,r))
\end{equation*}%
to be the volume of the ball $B(x,r)$ .

Let $(M,d,\mu)$ be a metric measure space.
 Let $C(M)$ be the continuous function space. For any $1\leq p\leq\infty$, let $L^p(M,\mu)$ be the Lebesgue function space, and set
\begin{equation*}
||u||_p=||u||_{L^p(M,\mu)}.
\end{equation*}
For any $\sigma>0$, define the functional $W_{\sigma,M}(u)$ on measurable functions on $M$ by
\begin{equation}
W_{\sigma,M}(u):=\sup\limits_{0<r<1}r^{-2\sigma}\int_M\left[\frac{1}{V(x,r)}\int_{B(x,r)}|u(x)-u(y)|^2d\mu(y)\right]d\mu(x).
\end{equation}
Define the space $W^{\sigma,2}$ as follows,
\begin{equation*}
W^{\sigma,2}=W^{\sigma,2}(M,d,\mu):=\{u\in L^2:W_{\sigma,M}(u)<\infty\}.
\end{equation*}
Thus $W^{\sigma,2}$ is a Banach space with the norm
\begin{equation*}
||u||_{\sigma,2}:=||u||_2+W_\sigma(u)^{1/2},
\end{equation*}
and it is one of the family of \emph{Besov spaces}, see \cite[Section 4]{GRIHL03}. Set
\begin{equation}
\beta^\ast:=2\sup\{\sigma: W^{\sigma,2} \text{ contains non-constant functions }\}.
\end{equation}
Following \cite{GRIHL03}, we call $\beta^\ast$ the \emph{critical exponent} of the family $W^{\sigma,2}$ of the Besov spaces in $(M,d,\mu)$. Note that the critical exponent $\beta^\ast$ is uniquely determined by $(M,d,\mu)$.

Let $F$ be a closed set in $\mathbb{R}^n$. A Borel measure $\mu$  on $F$
is said to be \emph{Alfors regular}, if there exists $0<\alpha\leq n$ and a constant $C>0$ such that for any Euclidean metric ball $B(x,r)$ with $0<r<1$,
\begin{equation}
C^{-1}r^\alpha\leq \mu(B(x,r))\leq Cr^\alpha.
\end{equation}
 It is known(\cite{JONW84}) that such $\mu$ is equivalent to the $\alpha-$dimensional Hausdorff measure on $F$.
 The set $F$ is called an $\alpha-$set if it admits an Alfors regular measure.
 From now on, we shall always use $d$ to denote the Euclidean metric on ${\mathbb R}^n$, and use $\mu$ to denote
 the $\alpha$-dimensional Hausdorff measure. (Actually, when we write $(F,d,\mu)$, we mean that $\mu$ is the restriction of the $\alpha$-dimensional Hausdorff measure on $F$.)

\begin{theorem}\label{T1}
Let $F$ be an $\alpha-$set in $\mathbb{R}^n$. Let $T: F\to T(F)$ be a bi-Lipschitz transform. Denote by $\beta_1^\ast$ and $\beta_2^\ast$ the critical exponents of $(F,d,\mu)$ and $(T(F), d, \mu)$, respectively. Then we have
\begin{equation}
\beta_1^\ast=\beta_2^\ast.
\end{equation}
\end{theorem}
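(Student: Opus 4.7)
The plan is to exploit the pullback operator $T^{\ast}v:=v\circ T$ and to show that it induces a bounded linear bijection between the Besov spaces $W^{\sigma,2}(T(F),d,\mu)$ and $W^{\sigma,2}(F,d,\mu)$ for every $\sigma>0$. Since the critical exponent $\beta^{\ast}$ is determined only by the set of $\sigma$ for which $W^{\sigma,2}$ contains a non-constant function, and since $T$ is a bijection (so $v$ is constant on $T(F)$ if and only if $T^{\ast}v$ is constant on $F$), preservation of the Besov classes under $T^{\ast}$ immediately forces $\beta_{1}^{\ast}=\beta_{2}^{\ast}$.

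Before setting up the comparison I would verify that $T(F)$ is itself an $\alpha$-set with respect to the Euclidean metric, and that the pushforward $T_{\ast}\mu$ is comparable to $\mathcal{H}^{\alpha}|_{T(F)}$. This is a direct consequence of the bi-Lipschitz distortion of Hausdorff measure, namely $C^{-\alpha}\mathcal{H}^{\alpha}(A)\le \mathcal{H}^{\alpha}(TA)\le C^{\alpha}\mathcal{H}^{\alpha}(A)$ for every Borel $A\subseteq F$. In particular, $T^{\ast}\colon L^{2}(T(F),\mu)\to L^{2}(F,\mu)$ is a linear bijection with equivalent norms.

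The core step is the quantitative equivalence
\[
W_{\sigma,F}(T^{\ast}v)\;\asymp\;W_{\sigma,T(F)}(v),
\]
with implicit constants depending only on $\alpha$, $\sigma$ and the bi-Lipschitz constant $C$. Using Alfors regularity to replace $V(x,r)$ by $r^{\alpha}$ up to a bounded factor, then changing variables $\xi=Tx$, $\eta=Ty$ and invoking $T_{\ast}\mu\asymp\mathcal{H}^{\alpha}|_{T(F)}$, the two-sided inclusion
\[
B_{T(F)}(Tx,r/C)\;\subseteq\;T(B_{F}(x,r))\;\subseteq\;B_{T(F)}(Tx,Cr)
\]
sandwiches the inner double integral at scale $r$ on $F$ between the corresponding double integrals at scales $r/C$ and $Cr$ on $T(F)$. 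Rescaling $r\mapsto r/C$ (or $r\mapsto Cr$) on the outer prefactor $r^{-2\sigma-\alpha}$ costs only the multiplicative factor $C^{2\sigma+\alpha}$, so the supremum over $r\in(0,1)$ is preserved. For the tail range in which the rescaled radius would exceed $1$, a direct Fubini-plus-Alfors-regularity estimate bounds the inner quantity by a constant multiple of $\|v\|_{2}^{2}$, which is absorbed into the $L^{2}$ part of the Besov norm.

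The main technical obstacle is precisely this scale-distortion bookkeeping: one must verify that enlarging or shrinking the admissible radii by a bounded multiplicative factor does not affect finiteness of the defining supremum of $W_{\sigma,M}$. This is where Alfors regularity enters essentially, since it identifies $V(x,r)$ with a pure power of $r$ and makes the normalizations on $F$ and on $T(F)$ directly comparable; a generic doubling condition would force more delicate estimates. Once the norm equivalence $\|T^{\ast}v\|_{\sigma,2}\asymp\|v\|_{\sigma,2}$ is in hand, the identification $W^{\sigma,2}(F,d,\mu)\cong W^{\sigma,2}(T(F),d,\mu)$ via $T^{\ast}$ is automatic, and the equality $\beta_{1}^{\ast}=\beta_{2}^{\ast}$ follows from the definition of the critical exponent.
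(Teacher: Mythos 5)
Your proposal is correct and follows essentially the same route as the paper: comparability of Hausdorff measure under bi-Lipschitz maps (giving $L^p$-norm bounds for the composition operator), the ball inclusion $T(B_F(x,r))\subseteq B_{T(F)}(Tx,Cr)$ to compare the double integrals, and the radius-rescaling bookkeeping with the tail range $1\le r<C$ absorbed into a multiple of $\|u\|_2^2$. The only cosmetic difference is that you state a two-sided norm equivalence for $T^\ast$ directly, whereas the paper proves the one-sided inequality $\beta_1^\ast\le\beta_2^\ast$ and obtains the reverse by symmetry.
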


\begin{example}\label{example:gasket}{\rm
Let $\left\{K_1,\{F_i\}_{i=1}^3\right\}$ be the Sierpi\'{n}ski gasket, $\left\{K_2,\{G_j\}_{j=1}^{27}\right\}$ be a generalized gasket\textbf{(rotation free)}, where the IFS are showing by Fig. \ref{F1} and Fig. \ref{F2}, respectively.

\begin{figure}[th]
\begin{tabular}{cc}
\begin{minipage}[t]{2.5in} 
\includegraphics[width=2.5in]{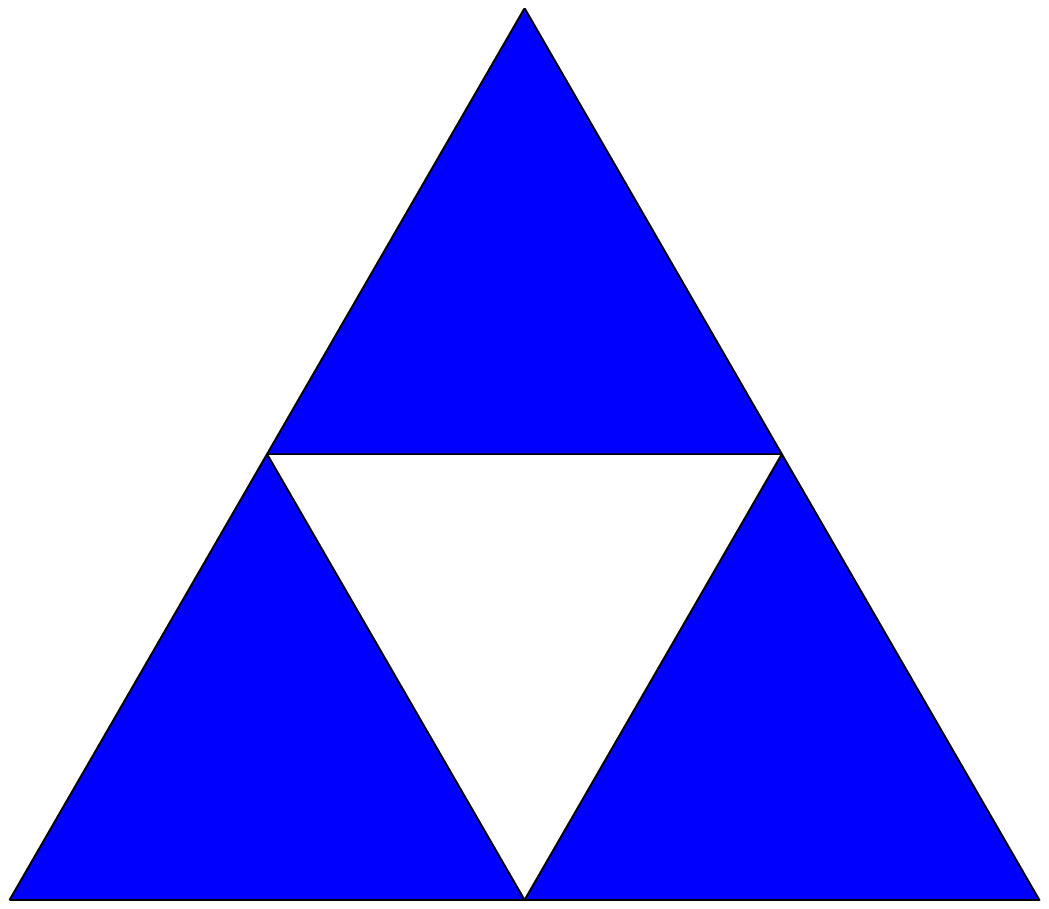}
\caption{IFS of $K_1$}\label{F1} \end{minipage} 
\begin{minipage}[t]{2.5in}
\includegraphics[width=2.5in]{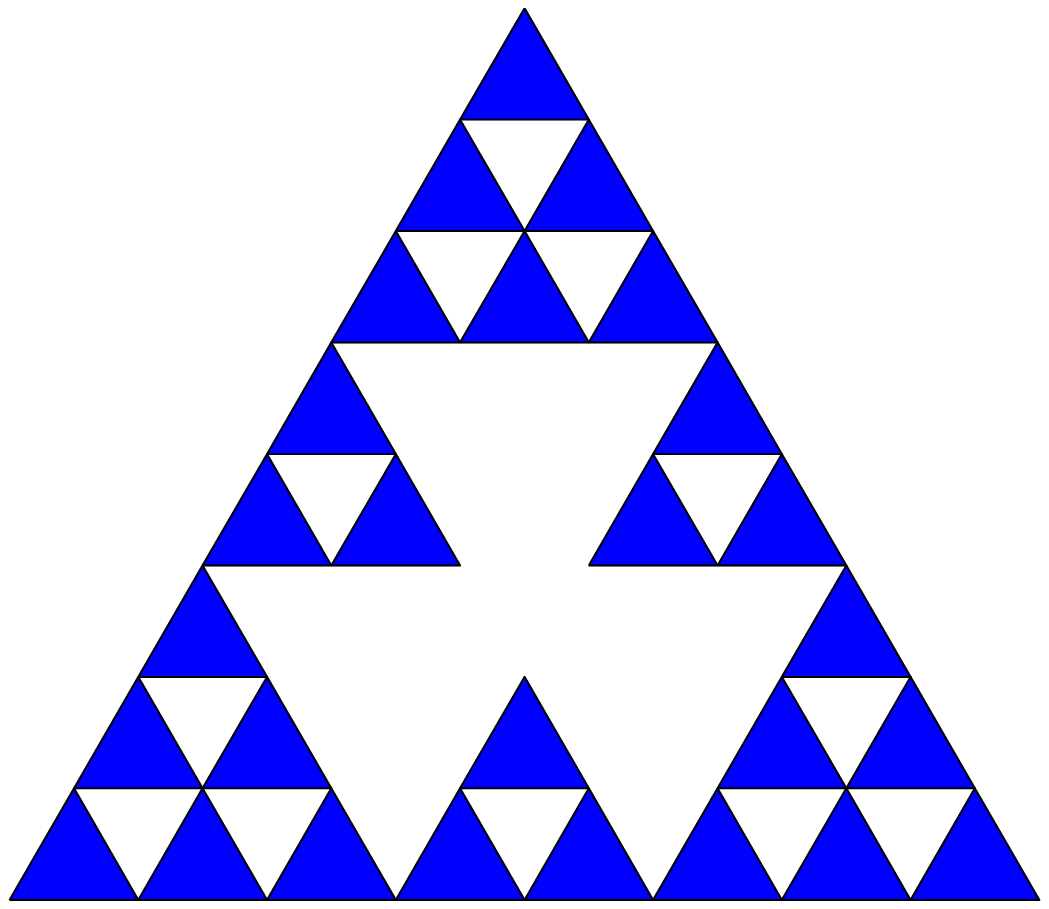} \caption{IFS of $K_2$}\label{F2}
\end{minipage} &
\end{tabular}
\end{figure}

We use the IFS of $K_1$ and $K_2$ in the above to construct some other self-similar sets.
Let $K_3$ and $K_4$ be the self-similar set  generated by the IFS
$$\{G_i\circ F_j\}_{1\leq i\leq 3, 1\leq j\leq 27} \quad \text{ and }\quad
\{F_j\circ G_i\}_{1\leq j\leq 27, 1\leq i\leq 3},$$
respectively.
See Fig. \ref{F3} and Fig. \ref{F4}.
It is clear that the Hausdorff dimensions of these sets are all equal to  $\alpha= \log3/\log2 $.

\begin{figure}[th]
\begin{tabular}{cc}
\begin{minipage}[t]{2.5in} 
\includegraphics[width=2.5in]{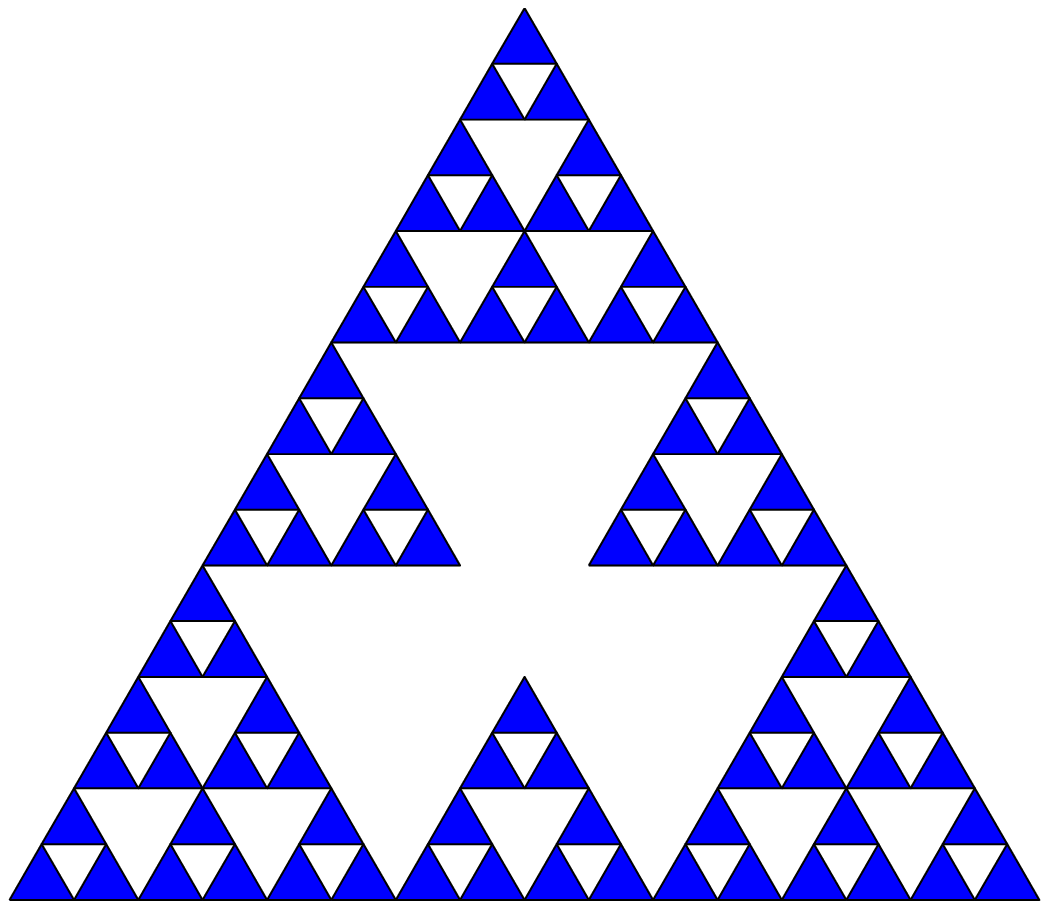}
\caption{IFS of $K_3$}\label{F3} \end{minipage} 
\begin{minipage}[t]{2.5in}
\includegraphics[width=2.5in]{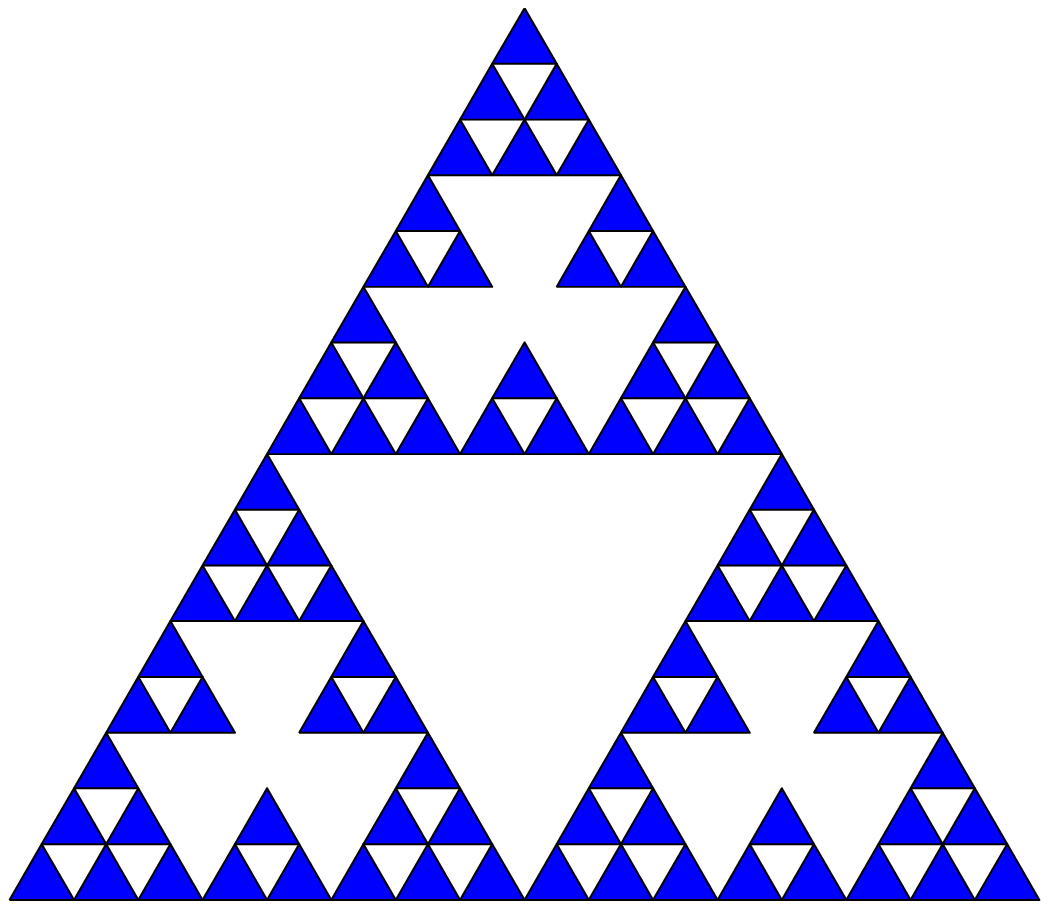} \caption{IFS of $K_4$}\label{F4}
\end{minipage} &
\end{tabular}
\end{figure}

 Using Theorem \ref{T1},
 we shall show that any two of $K_1$, $K_2$, $K_3$ and $K_4$ are not Lipschitz equivalent, except the pair $K_3$ and $K_4$.

 Actually $K_3$ and $K_4$  are not homeomorphic by a connectedness argument. Indeed,
 we can remove six points on $K_3$ to obtain six connected components, while it is impossible on $K_4$.
}
\end{example}

The paper is organized as follows. In Section 2, we proof Theorem \ref{T1}. In Section 3, we discuss how to calculate $\beta^*$ by using Dirichlet \textbf{forms}.

\section{\textbf{Proof of Theorem \ref{T1}}}

First, we give two lemmas. Recall that\textbf{ $\mu$ denotes} the $\alpha$-dimensional Hausdorff measure.

\begin{lemma}\label{L1}
Assume that all the assumptions in Theorem \ref{T1}  hold. Then for any $1\leq p\leq\infty$, there exists a constant $\lambda_p$ only depending on $F,T$ and $p$ such that for any measurable function $v$ on $F$,
\begin{equation}\label{pnorm}
||v\circ T^{-1}||_{L^p(T(F))}\leq \lambda_p||v||_{L^p(F)}.
\end{equation}
\end{lemma}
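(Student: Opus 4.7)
The plan is to reduce the inequality to a change-of-variables computation, exploiting the key fact that a bi-Lipschitz map distorts the $\alpha$-dimensional Hausdorff measure by at most a bounded factor. Specifically, if $T$ has bi-Lipschitz constant $C$, then for every Borel set $A\subset F$ one has the classical estimate
\begin{equation*}
C^{-\alpha}\mathcal{H}^{\alpha}(A)\leq \mathcal{H}^{\alpha}(T(A))\leq C^{\alpha}\mathcal{H}^{\alpha}(A),
\end{equation*}
coming from the fact that any covering of $A$ by sets of diameter $\leq \delta$ is mapped by $T$ to a covering of $T(A)$ by sets of diameter $\leq C\delta$, and vice versa for $T^{-1}$. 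In particular, $T(F)$ is again an $\alpha$-set, and the pushforward measure $T_{\ast}\mu_F$ satisfies $C^{-\alpha}\mu_{T(F)}\leq T_{\ast}\mu_F\leq C^{\alpha}\mu_{T(F)}$, where we write $\mu_F$ and $\mu_{T(F)}$ to emphasize on which set the Hausdorff measure is restricted.

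For finite $p$, I would then apply the standard transport-of-measure identity. Letting $B\subset T(F)$ be Borel and substituting $x=T^{-1}(y)$,
\begin{equation*}
\int_{T(F)}|v\circ T^{-1}(y)|^{p}\,d\mu_{T(F)}(y)\leq C^{\alpha}\int_{T(F)}|v\circ T^{-1}(y)|^{p}\,dT_{\ast}\mu_F(y)=C^{\alpha}\int_{F}|v(x)|^{p}\,d\mu_F(x).
\end{equation*}
Taking the $p$-th root yields the lemma with $\lambda_p=C^{\alpha/p}$.

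For $p=\infty$, the same measure-comparability shows that a set $N\subset F$ is $\mu_F$-null if and only if $T(N)$ is $\mu_{T(F)}$-null; hence the essential suprema agree and one may take $\lambda_\infty=1$. There is no real obstacle here: the whole argument is a routine two-line computation once the pushforward comparison is in hand. The only point that deserves care is the initial covering-based estimate relating $\mathcal{H}^\alpha(A)$ and $\mathcal{H}^\alpha(T(A))$, but this is standard and follows directly from the bi-Lipschitz property together with the definition of Hausdorff measure.
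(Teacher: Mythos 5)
Your proposal is correct and follows essentially the same route as the paper: the paper's proof likewise rests on the comparability $C^{-1}\mu(A)\leq\mu(T(A))\leq C\mu(A)$ for Borel $A\subseteq F$ (citing Falconer) followed by "an approximation argument by using simple functions," which is exactly the monotone-class/pushforward computation you spell out. Your version merely makes the covering argument and the constant $\lambda_p=C^{\alpha/p}$ explicit and handles $p=\infty$ separately, all of which is consistent with the paper.
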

\begin{proof} It is well known that for any Borel set $A\subseteq F$,
\begin{equation}
C^{-1}\mu(A)\leq\mu(T(A))\leq C\mu(A).
\end{equation}
(See for example, \cite{Fal90}.) Therefore, an approximation argument by using simple functions leads to (\ref{pnorm}).
\end{proof}

\begin{lemma}\label{L2}
Assume that all the assumptions in Theorem \ref{T1}  hold. Then for any $\sigma>0$, there exist constants $C,C'>0$ only depending on $F,T$ and $\sigma$ such that for any $0<r<1$ and any measurable function $u$ on $F$,
\begin{align}
&\int_{T(F)}\left[\int_{B(x,r)}|u\circ T^{-1}(x)-u\circ T^{-1}(y)|^2d\mu(y)\right]d\mu(x)\notag\\
\leq &C'\int_{F}\left[\int_{B(x,Cr)}|u (x)-u (y)|^2d\mu(y)\right]d\mu(x).\label{Besovnorm}
\end{align}
\end{lemma}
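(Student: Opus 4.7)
The plan is to prove this by a bi-Lipschitz change of variables $x = T\tilde x$, $y = T\tilde y$ applied to each of the two integrations on the left-hand side. The two ingredients are (i) the set-level measure comparison $C^{-1}\mu(A) \leq \mu(T(A)) \leq C\mu(A)$ used in the proof of Lemma \ref{L1}, which transfers integrals from $T(F)$ back to $F$; and (ii) the bi-Lipschitz estimate $|T\tilde x - T\tilde y| \geq L^{-1}|\tilde x - \tilde y|$, which ensures that the preimage of the Euclidean ball $B(x,r)$ under $T$ is contained in $B(\tilde x, L r) \cap F$.

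First I would upgrade the set-level comparison to an integral comparison: since the pushforward measure $T_*(\mu|_F)$ and $\mu|_{T(F)}$ are mutually comparable on $T(F)$, a standard simple-function approximation (exactly as in the proof of Lemma \ref{L1}) gives
\[
\int_{T(F)} g(y)\,d\mu(y) \leq \tilde C \int_F g(T\tilde y)\,d\mu(\tilde y)
\]
for every non-negative measurable $g$ on $T(F)$, with $\tilde C$ depending only on $F$ and $T$.

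I would then apply this identity twice. For fixed $\tilde x \in F$, substituting $y = T\tilde y$ in the inner integral, combined with the inclusion $\{\tilde y \in F : T\tilde y \in B(T\tilde x, r)\} \subseteq B(\tilde x, L r) \cap F$, yields
\[
\int_{B(T\tilde x, r) \cap T(F)} |u(\tilde x) - u(\tilde y)|^2\,d\mu(y) \leq \tilde C \int_{B(\tilde x, L r) \cap F} |u(\tilde x) - u(\tilde y)|^2\,d\mu(\tilde y).
\]
A second application of the integral comparison to the outer integration $\int_{T(F)} [\,\cdot\,]\,d\mu(x)$ with $x = T\tilde x$ picks up another factor of $\tilde C$, giving the desired inequality with $C = L$ and $C' = \tilde C^2$.

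There is no real obstacle here: the inequality is purely geometric and, despite the statement's mention of $\sigma$, the constants $C, C'$ are in fact independent of $\sigma$. The $\sigma$-dependence matters only later, when this pointwise estimate is multiplied by $r^{-2\sigma}$ and a supremum over $0 < r < 1$ is taken to compare the Besov functionals $W_{\sigma, M}$ on $F$ and $T(F)$; at that stage the factor $r^{-2\sigma}$ on the left produces $(Lr)^{-2\sigma} \cdot L^{2\sigma}$, so $L^{2\sigma}$ is absorbed into the new constant.
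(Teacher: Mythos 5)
Your proposal is correct and follows essentially the same route as the paper: the paper likewise applies the measure-comparison of Lemma \ref{L1} twice (with $p=2$ to the inner integrand and $p=1$ to the outer one, which is the same mechanism as your single nonnegative-function integral comparison) together with the inclusion $T^{-1}(B(x,r))\subseteq B(T^{-1}x,Cr)$, obtaining $C'=\lambda_1\lambda_2$. Your side remark that $C,C'$ are in fact independent of $\sigma$ is also consistent with the paper's argument.
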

\begin{proof}
Fix $x\in T(F)$, then by using the fact that $T$ is Lipschitz, there exists $C>0$ which is independent of $x$ such that
\begin{equation*}
T^{-1}(B(x,r))\subseteq B(T^{-1}x,Cr).
\end{equation*}
Applying Lemma \ref{L1} \textbf{with $p=2$ to}
\begin{equation*}
v\circ T^{-1}(y)=|u\circ T^{-1}(x)-u\circ T^{-1}(y)|\cdot1_{B(x,r)}(y),
\end{equation*}
we obtain
\begin{align}
&\int_{B(x,r)}|u\circ T^{-1}(x)-u\circ T^{-1}(y)|^2d\mu(y)\notag\\
\leq&\lambda_2\int_{T^{-1}(B(x,r))}|u\circ T^{-1}(x)-u(z)|^2d\mu(z)\notag\\
\leq& \lambda_2\int_{B(T^{-1}x,Cr)}|u\circ T^{-1}(x)-u(z)|^2d\mu(z).
\end{align}
Thus by integrating on $x$ over $T(F)$, we get
\begin{align}
&\int_{T(F)}\left[\int_{B(x,r)}|u\circ T^{-1}(x)-u\circ T^{-1}(y)|^2d\mu(y)\right]d\mu(x)\notag\\
\leq& \lambda_2\int_{T(F)}\left[\int_{B(T^{-1}x,Cr)}|u\circ T^{-1}(x)-u(z)|^2d\mu(z)\right]d\mu(x).\label{e1}
\end{align}
Applying Lemma \ref{L1} again\textbf{ with $p=1$ to}
\begin{equation*}
 v\circ T^{-1}(x)=\int_{B(T^{-1}(x),Cr)}|u\circ T^{-1}(x)-u(z)|^2d\mu(z),
\end{equation*}
we have
\begin{align}
&\int_{T(F)}\left[\int_{B(T^{-1}x,Cr)}|u\circ T^{-1}(x)-u(z)|^2d\mu(z)\right]d\mu(x)\notag\\
\leq& \lambda_1\int_F\left[\int_{B(x,Cr)}|u(x)-u(y)|^2d\mu(y)\right]d\mu(x).\label{e2}
\end{align}
Combining (\ref{e1}) and (\ref{e2}), we have (\ref{Besovnorm}) with $C'=\lambda_1\lambda_2$.
\end{proof}

\begin{proof}[\textit{Proof of Theorem \ref{T1}.}]
We only need to show that $\beta_1^\ast\leq\beta_2^\ast$. For any $\sigma<\beta_1^\ast$, we can find a non-constant function $u\in W^{\sigma,2}(F,d,\mu)$, thus $||u||_{L^2(F)}<\infty$ and $W_{\sigma,F}(u)<\infty$.

Firstly, by Lemma \ref{L1}, we have $u\circ T^{-1}\in L^2(T(F))$.
Secondly, by Lemma \ref{L2}, we have
\begin{align}
W_{\sigma,T(F)}(u\circ T^{-1})\leq&\sup\limits_{0<r<1}C'r^{-2\sigma-\alpha}\int_{T(F)}\left[\int_{B(x,r)}|u\circ T^{-1}(x)-u\circ T^{-1}(y)|^2d\mu(y)\right]d\mu(x)\notag\\
\leq &C'\sup\limits_{0<r<1}r^{-2\sigma-\alpha}\int_{F}\left[\int_{B(x,Cr)}|u (x)-u (y)|^2d\mu(y)\right]d\mu(x)\notag\\
\leq&C'\sup\limits_{0<r<C}r^{-2\sigma-\alpha}\int_{F}\left[\int_{B(x,r)}|u (x)-u (y)|^2d\mu(y)\right]d\mu(x)\notag\\
\leq&C'\left\{\sup\limits_{0<r<1}+\sup\limits_{1\leq r<C}\right\}r^{-2\sigma-\alpha}\int_{F}\left[\int_{B(x,r)}|u (x)-u (y)|^2d\mu(y)\right]d\mu(x)\notag\\
\leq&C'W_{\sigma,F}(u)+2C'\int_{F}|u(x)|^2\sup\limits_{x\in F}V(x,C)\notag\\
\leq&C'W_{\sigma,F}(u)+C'C^\alpha||u||_{L^2(F)}<\infty.\notag
\end{align}
Therefore $u\circ T^{-1}$ is a non-constant function in $W^{\sigma,2}(T(F),d,\mu)$, which implies that $\sigma<\beta_2^\ast$. Since $\sigma<\beta_1^\ast$ is arbitrary, we conclude that $\beta_1^\ast\leq\beta_2^\ast$.
\end{proof}

\section{\textbf{Computation of walk dimension}}\label{sec:example}

In this section, we are concerned with how to calculate $\beta^\ast$ of some self-similar sets.

\begin{notation}
The sign $f\asymp g$ means that there exists constant $c>0$ such that $
c^{-1}f\leq g\leq cf$. The letters $C,C_{i},C^{\prime },C_{i}^{\prime }$
etc. denote constants whose values are not important and may change from
line to line.
\end{notation}

The following result shows that under a mild condition, $\beta^*$ coincides with the walk dimension
and hence, we can use the techniques in analysis on fractals to calculate $\beta^*$.

 \begin{theorem}\label{T2} (\cite[Theorem 4.6]{GRIHL03}) If there exists a heat kernel $p_t$ on $(M,d,\mu)$ satisfying a sufficient decay condition: ($0<t<t_0$, $\Phi$ is some nonnegative decreasing function on $[0,+\infty)$)
\begin{align}
&p_t(x,y)\asymp \frac{C}{t^{\alpha/\beta}}\Phi\left(c\frac{d(x,y)}{t^{1/\beta}}\right). \label{eq:1}\\
&\int_0^\infty s^{\alpha+\beta+\varepsilon}\Phi(s)\frac{ds}{s}<\infty. \label{eq:2}
\end{align}
Then $\beta=\beta^\ast$. (The number  $\beta$ satisfying \eqref{eq:1} and \eqref{eq:2} is called the walk dimension of $M$.)
\end{theorem}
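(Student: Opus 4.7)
The idea is to translate the purely metric Besov quantity $W_{\sigma,M}(u)$ into an intrinsic, semigroup-based quantity controlled by the heat kernel estimate, and then analyse it via spectral calculus for the generator of $P_t$. Concretely, write $P_tu(x)=\int_M p_t(x,y)u(y)\,d\mu(y)$, let $L$ be its generator, and set
\begin{equation*}
\mathcal{J}_t(u):=\int_M\int_M p_t(x,y)\,|u(x)-u(y)|^2\,d\mu(y)\,d\mu(x)=2\langle u,(I-P_t)u\rangle_{L^2}.
\end{equation*}
The plan is to prove $\mathcal{J}_t(u)\asymp t^{2\sigma/\beta}$ (up to lower-order terms in $\|u\|_2$) if and only if $W_{\sigma,M}(u)<\infty$ with $t=r^\beta$, and then read off both halves of $\beta=\beta^\ast$ from the spectral representation of $\langle u,(I-P_t)u\rangle$.

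The first and most delicate step is the equivalence between $W_{\sigma,M}(u)$ and the heat-kernel bilinear form. Using \eqref{eq:1} together with the Ahlfors regularity $V(x,r)\asymp r^\alpha$, I would decompose $\mathcal{J}_t(u)$ into dyadic annuli $A_j(x)=\{y: 2^j t^{1/\beta}\le d(x,y)<2^{j+1}t^{1/\beta}\}$ for $j\ge 0$ and the core ball $B(x,t^{1/\beta})$. On each annulus the bound $\Phi(cd(x,y)/t^{1/\beta})\lesssim \Phi(c2^j)$ combined with the moment condition \eqref{eq:2} gives a geometric summation: the contribution from scale $2^j t^{1/\beta}$ is controlled by $(2^j)^{\alpha+\beta+\varepsilon}\Phi(c2^j)$ times the Besov integral at scale $2^j t^{1/\beta}$, and these weights are summable. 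Thus
\begin{equation*}
\mathcal{J}_t(u)\asymp \sum_{j\ge 0} c_j \cdot r_j^{\alpha}\cdot\bigl(\text{Besov integral at scale }r_j\bigr),\qquad r_j=2^jt^{1/\beta},
\end{equation*}
with summable weights $c_j$. Dividing by $t^{2\sigma/\beta}=r^{2\sigma}$ and taking $\sup$ over $0<t<t_0$ yields the two-sided comparison
\begin{equation*}
\sup_{0<t<t_0}t^{-2\sigma/\beta}\mathcal{J}_t(u)\asymp W_{\sigma,M}(u)\quad\text{modulo }C\|u\|_2^2.
\end{equation*}

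Granted this equivalence, the spectral step is straightforward. Let $\{E_\lambda\}$ be the spectral family of $-L$; then
\begin{equation*}
\langle u,(I-P_t)u\rangle=\int_0^\infty (1-e^{-t\lambda})\,d\|E_\lambda u\|^2.
\end{equation*}
For the upper bound $\beta^\ast\le\beta$, fix $\sigma>\beta/2$ and suppose $u\in W^{\sigma,2}$. The comparison gives $\int_0^\infty(1-e^{-t\lambda})\,d\|E_\lambda u\|^2\le C\,t^{2\sigma/\beta}$ for small $t$. Using $1-e^{-t\lambda}\ge c\min(t\lambda,1)$ and $2\sigma/\beta>1$, one first obtains $\|E_\lambda u\|^2=\|E_0 u\|^2$ for every $\lambda>0$, i.e.\ $u$ lies in the kernel of $L$; connectedness of $M$ (embedded implicitly in the heat-kernel setting) forces $u$ to be constant. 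For the lower bound $\beta^\ast\ge\beta$, pick any non-constant $v\in L^2$ and put $u=P_1 v$. Then $u\in\mathrm{dom}((-L)^k)$ for every $k$, so $\int_0^\infty\lambda^{2\sigma/\beta}\,d\|E_\lambda u\|^2<\infty$, and by the elementary inequality $1-e^{-t\lambda}\le(t\lambda)^{2\sigma/\beta}$ valid when $2\sigma/\beta<1$, we get $\langle u,(I-P_t)u\rangle\le t^{2\sigma/\beta}\int\lambda^{2\sigma/\beta}\,d\|E_\lambda u\|^2$, so $u$ is a non-constant element of $W^{\sigma,2}$.

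The main obstacle is the first step: carefully producing the two-sided comparison between $\mathcal{J}_t(u)$ and $W_{\sigma,M}(u)$ so that error terms are truly lower order. The integrability assumption \eqref{eq:2} with the extra $\varepsilon$ is precisely what one needs to sum the dyadic-shell contributions and control the passage from the single scale $t^{1/\beta}$ appearing inside $\mathcal{J}_t$ to the supremum over scales $r$ appearing in $W_{\sigma,M}$. The secondary technical point is justifying that constants are the only $L$-harmonic $L^2$ functions, which in the Alfors-regular setting follows from stochastic completeness and the lower heat-kernel bound.
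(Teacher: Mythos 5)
The paper does not prove this statement: it is quoted directly from Grigor'yan--Hu--Lau \cite[Theorem 4.6]{GRIHL03}, so there is no in-paper argument to compare against. Your outline reconstructs, correctly and in essentially the same way as that reference, the standard proof: identify $2\langle u,(I-P_t)u\rangle$ with $\int_M\int_M p_t(x,y)|u(x)-u(y)|^2\,d\mu\,d\mu$ (this uses stochastic completeness), compare it two-sidedly with the Besov functional via the dyadic-annulus decomposition (the upper heat-kernel bound plus \eqref{eq:2} for one direction, the near-diagonal lower bound alone for the other), and then run the spectral dichotomy at $\sigma=\beta/2$. Two points deserve more care than your sketch gives them, though neither is a gap in the idea: (i) the geometric sum $\sum_j 2^{j(\alpha+2\sigma)}\Phi(c2^j)$ converges only for $2\sigma\leq\beta+\varepsilon$, so triviality of $W^{\sigma,2}$ is obtained directly only for $\sigma\in(\beta/2,(\beta+\varepsilon)/2]$ and must be propagated to larger $\sigma$ via the monotonicity $W^{\sigma,2}\subseteq W^{\sigma',2}$ for $\sigma'<\sigma$ on a bounded space; (ii) the passage from $P_tu=u$ for all $t<t_0$ to $u$ constant needs strict positivity of $\Phi$ near $0$ together with a chaining or connectedness argument (or unrestricted $t$), which you flag but leave implicit.
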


In the following, we use the self-similar sets in Example \ref{example:gasket} to illustrate how to calculate the walk dimension.

\smallskip

 \subsection{Dirichlet form} By using $(\Delta-Y)$-transforms (e.g.\cite{KIG01} or \cite{STR06}), we can construct standard Dirichlet forms.

On $\left\{K_1,\{F_i\}_{i=1}^3\right\}$, define
\begin{align*}
  \mathcal{E}^{(1)}(u) & :=\lim\limits_{m\rightarrow\infty}\left(\frac{5}{3}\right)^{m}\sum\limits_{x\underset{m}{\sim} y}(u(x)-u(y))^2 \\
  \mathcal{F}^{(1)} & :=\{u\in C(K_1) : \mathcal{E}^{(1)}(u)<\infty\}.
\end{align*}
On $\left\{K_2,\{G_j\}_{i=1}^{27}\right\}$, define
\begin{align*}
  \mathcal{E}^{(2)}(u) & :=\lim\limits_{m\rightarrow\infty}\left(\frac{295}{63}\right)^{m}\sum\limits_{x\underset{m}{\sim} y}(u(x)-u(y))^2 \\
  \mathcal{F}^{(2)} & :=\{u\in C(K_2) : \mathcal{E}^{(2)}(u)<\infty\}.
\end{align*}
We denote the resistance scaling constants in these Dirichlet forms by
$$r_1^{-1}=\frac{5}{3}, \ r_2^{-1}=\frac{295}{63},$$
and we set
$$\gamma_1=\frac{\log\left({5}/{3}\right)}{\log2}, \quad \gamma_2=\frac{\log\left({295}/{63}\right)}{\log8}.$$

Similarly, for $K_3$ and $K_4$, the $(\Delta-Y)$-transform gives us
$$r_3^{-1}=r_4^{-1}=r_1^{-1}\cdot r_2^{-1}=\frac{1475}{189},$$
 thus for this two sets, the standard Dirichlet forms have the same expression. Let $$\gamma_3=\gamma_4=\frac{\log\left({1475}/{189}\right)}{\log16}.$$

\subsection{Heat kernel estimates}
Using standard ways (e.g.\cite{BAR98},\cite{KUM93}),for instance, first deducing Nash-type inequality to obtain the existence and on-diagonal upper bound of the heat kernel, and together with the estimation of the exist time, one can obtain the off-diagonal upper bound; Using the upper bound to get a near-diagonal lower bound, and together with a chain argument( in all of our examples, the Euclidean metric satisfies the \emph{chain condition}\cite{GRIHL03}), one can obtain the off-diagonal lower bound. We list the two-sided estimates of heat kernels associated with these Dirichlet forms as follows.
Let $p^i_t(x,y)$ be the heat kernel of $(\mathcal{E}^{(i)},\mathcal{F}^{(i)})$ on $L^2(K_i,\mu)$, $i=1,2,3,4$. Then we have
\begin{equation}\label{hk}
p^i_t(x,y)\asymp \frac{C}{t^{\alpha/\beta_i}}\exp\left(-c\left(\frac{d(x,y)}{t^{1/\beta_i}}\right)^{\beta_i/(\beta_i-1)}\right)
\end{equation}
where $\beta_i=\alpha+\gamma_i$.

These heat kernels are with exponential decay and satisfy the conditions in \cite[Theorem 4.6]{GRIHL03}, thus $\beta_i=\beta_i^\ast$ by Theorem \ref{T2}. Hence, by Theorem \ref{T1},
 we see that any two of $K_1$, $K_2$, $K_3$ and $K_4$ are not Lipschitz equivalent, except the pair $K_3$ and $K_4$.

\bibliographystyle{siam}

\begin{thebibliography}{1}

\bibitem{BAR98}
{\sc M.~Barlow}, {\em Diffusions on fractals}, vol.~{\bf 1690} of Lect. Notes
  Math., Springer, 1998, pp.~1--121.

  \bibitem{DS}
G.~David and S.~Semmes, {\it Fractured fractals and broken dreams :
self-similar geometry through metric and measure}, Oxford Univ.\
Press,~1997.

  \bibitem{FaMa92} K. J.~Falconer and D. T.~Marsh, {\it On the Lipschitz equivalence of
Cantor sets}, Mathematika, \textbf{39} (1992), 223--233.


\bibitem{Fal90} K. J. Falconer,
{\it Fractal Geometry, Mathematical Foundations and Applications,} Wiley, New York, 1990.


\bibitem{GRIHL03}
{\sc A.~Grigor'yan, J.~Hu, and K.-S. Lau}, {\em Heat kernels on metric-measure
  spaces and an application to semilinear elliptic equations}, Trans. Amer.
  Math. Soc., \textbf{355} (2003), pp.~2065--2095.

\bibitem{JONW84}
{\sc A.~Jonsson and H.~Wallin}, {\em Function spaces on subsets of $R^n$},
  Math. Reports Vol. {\bf 2}, Acad. Publ., Harwood, 1984.

\bibitem{KIG01}
{\sc J.~Kigami}, {\em Analysis on Fractals}, Cambridge Univ. Press, 2001.

\bibitem{KUM93}
{\sc T.~Kumagai}, {\em Estimates of the transition densities for Brownian motion on nested fractals}, Probab. Theory and Related Fields, \textbf{96} (1993), pp.~205--224.

\bibitem{LuoLau13} J. J. Luo and K. S. Lau,
{\it Lipschitz equivalence of self-similar sets and hyperbolic boundaries},
Adv. Math., \textbf{235} (2013), 555--579.

\bibitem{RRW12}
H. Rao, H. J. Ruan, and Y. Wang,  {\it Lipschitz equivalence of Cantor sets and algebraic properties of contraction ratios},  Trans. Amer. Math. Soc.,  \textbf{364} (2012), 1109-1126.

\bibitem{RRW13} H. Rao, H. J. Ruan and Y. Wang, \emph{Lipschitz equivalence of self-similar sets: algebraic and geometric properites,} Contemp. Math., \textbf{600} (2013).

\bibitem{RRX06} H.~Rao, H. J.~Ruan and L. F.~Xi, {\it Lipschitz equivalence of self-similar
sets}, C. R. Acad. Math. Sci. Paris., \textbf{342} (2006),
191--196.

\bibitem{RZ15} H. Rao and Y. Zhang,
\emph{Higher dimensional Frobenius problem: Maximal saturated cone,
growth function and rigidity,} \emph{J. Math. Pures Appl.}, \textbf{104} (2015), 868-881.

\bibitem{RaoZhu} H. Rao and Y. J. Zhu, {\it Lipschitz equivalence of fractal squares which are not totally disconnected},
Preprint 2016.

\bibitem{RuanWang} H. J. Ruan and Y. Wang, {\it Topological invariants and Lipschitz equivalence of fractal squares},
Preprint 2015.

\bibitem{RWX12} H. J. Ruan, Y. Wang, L. F. Xi,
{\it Lipschitz equivalence of self-similar sets with touching structures},
 Nonlinearity,  \textbf{27} (2014), 1299--1321.

\bibitem{STR06}
{\sc R.~Strichartz}, {\em Differential equations on fractals: a tutorial},
  Princeton University Press, 2006.

  \bibitem{Why} G. T. Whyburn,  {\it Topological characterization of the Sierpinski curve,}
   Fund. Math., \textbf{45} (1958), 320--324.

\bibitem{XiXi10} L. F. Xi and Y. Xiong, {\it Self-similar sets with initial cubic patterns},
 C. R. Math. Acad. Sci. Paris, \textbf{348} (2010),  15-20.

\bibitem{XiXi12}L.  F. Xi and  Y. Xiong, {\it Lipschitz equivalence of fractals generated by nested cubes},
 Math Z.,  \textbf{271} (2012), 1287--1308.

\end{thebibliography}

\end{document}